\definecolor{ufogreen}{rgb}{0.24, 0.82, 0.44}
\begin{document}

\newtheorem{theorem}{Théorème}[section]
\newtheorem{theore}{Théorème}
\newtheorem{definition}[theorem]{Définition}
\newtheorem{proposition}[theorem]{Proposition}
\newtheorem{corollary}[theorem]{Corollaire}
\newtheorem*{con}{Conjecture}
\newtheorem*{remark}{Remarque}
\newtheorem*{remarks}{Remarques}
\newtheorem*{pro}{Problème}
\newtheorem*{examples}{Exemples}
\newtheorem*{example}{Exemple}
\newtheorem{lemma}[theorem]{Lemme}

\numberwithin{equation}{section}

\newcommand{\bZ}{\mathbb{Z}}
\newcommand{\bC}{\mathbb{C}}
\newcommand{\bN}{\mathbb{N}}
\newcommand{\bQ}{\mathbb{Q}}
\newcommand{\bM}{\mathbb{N^{*}}}
\newcommand{\bR}{\mathbb{R}}

\title{$\lambda$-quiddité et sous-groupes engendrés par un nombre algébrique}

\author{Flavien Mabilat}

\date{}

\keywords{$\lambda$-quiddity; modular group; cyclic subgroup; algebraic number}

\address{
}
\def\emailaddrname{{\itshape Courriel}}
\email{flavien.mabilat@univ-reims.fr}

\address{Laboratoire de Mathématiques de Reims,
UMR9008 CNRS et Université de Reims Champagne-Ardenne, 
U.F.R. Sciences Exactes et Naturelles 
Moulin de la Housse - BP 1039 
51687 Reims cedex 2,
France}

\maketitle

\selectlanguage{french}
\begin{abstract}

Lors de son travail consacré aux frises de Coxeter, M. Cuntz a initié l'étude de la notion de $\lambda$-quiddité et a posé le problème de l'étude de cette dernière sur certaines sous-parties de $\bC$. Plus précisément, les $\lambda$-quiddités sont les solutions d'une équation matricielle, liée à divers objets mathématiques, que l'on cherche à résoudre sur différents ensembles. L'objectif de ce texte est d'apporter quelques nouveaux éléments de réponse au problème soulevé par M. Cuntz dans le cas de divers sous-groupes monogènes de ($\bC,+$) engendrés par un nombre algébrique. En particulier, on étudiera les cas des sous-groupes engendrés par $a+b\sqrt{k}$.
\\
\end{abstract}

\selectlanguage{english}
\begin{abstract}

During his work devoted to Coxeter's friezes, M. Cuntz initiated the study of the notion of $\lambda$-quiddity and raised the problem of the study of this over some subsets of $\bC$. More specifically, $\lambda$-quiddities are the solutions to a matrix equation, related to various mathematical objects, which we seek to solve over different sets. The aim of this text is to provide some new insights into the problem raised by M. Cuntz in the case of some cyclic subgroups of ($\bC,+$) generated by an algebraic number. In particular, we will study the cases of subgroups generated by $a+b\sqrt{k}$.
\\
\end{abstract}

\selectlanguage{french}

\thispagestyle{empty}

\textbf{Mots clés :} $\lambda$-quiddité; groupe modulaire; sous-groupe monogène; nombre algébrique
\\
\\ \indent \textbf{Classification :} 05A05
\\
\begin{flushright}
 \textit{\og L'homme intelligent se mesure à ce qu'il sait ne pas comprendre. \fg} 
\\ Édouard Herriot, \textit{Notes et Maximes}
\end{flushright}

\section{Introduction}
\label{Intro}

Apparues au tout début des années soixante-dix sous la plume du mathématicien britannique H. S. M. Coxeter afin d'étudier les formules de Gauss associées au Pentagramma mirificum (voir \cite{Cox}), les frises de Coxeter se sont rapidement affranchies de leur rôle d'outil intermédiaire. Aujourd'hui, ces dernières, qui sont des arrangements de nombres dans le plan vérifiant une relation arithmétique appelée règle unimodulaire, constituent un objet d'étude à part entière dont les ramifications couvrent une myriade de sujets différents (voir par exemple \cite{Mo1}). Ceci explique que de très nombreux travaux soient consacrés à ces objets et à leurs applications. En particulier, un des principaux levier d'action pour étudier les frises est de considérer l'équation matricielle ci-dessous :
\[M_{n}(a_{1},\ldots,a_{n}):=\begin{pmatrix}
   a_{n} & -1 \\[4pt]
    1    & 0 
   \end{pmatrix}
\begin{pmatrix}
   a_{n-1} & -1 \\[4pt]
    1    & 0 
   \end{pmatrix}
   \cdots
   \begin{pmatrix}
   a_{1} & -1 \\[4pt]
    1    & 0 
    \end{pmatrix}=-Id.\]
		
\noindent En effet, les solutions de cette équation interviennent directement dans la construction des frises de Coxeter (voir \cite{BR} et \cite{CH} proposition 2.4). Ceci a naturellement amené plusieurs auteurs, notamment M. Cuntz et V. Ovsienko, à se pencher sur la généralisation ci-dessous de l'équation qui vient d'être présentée :

\begin{equation}
\label{a}
\tag{$E$}
M_{n}(a_1,\ldots,a_n)=\pm Id.
\end{equation}

\noindent L'étude des matrices $M_{n}(a_1,\ldots,a_n)$ est d'autant plus intéressante qu'elles interviennent dans de multiples branches des mathématiques, notamment dans l'expression des réduites des fractions continues ou dans l'étude des solutions des équations de Sturm-Liouville discrètes (voir par exemple \cite{O}). Elles apparaissent également dans l'étude du groupe modulaire. En effet, toute matrice de $SL_{2}(\mathbb{Z})$ peut s'écrire sous la forme $M_{n}(a_{1},\ldots,a_{n})$ avec $a_{1},\ldots,a_{n}$ des entiers strictement positifs. Ainsi, pour chercher les différentes écritures des éléments des sous-groupes de congruence, on doit résoudre \eqref{a} sur les anneaux $\bZ/N\bZ$.
\\
\\ \indent Les solutions de cette équation matricielle sont nommées $\lambda$-quiddités. Afin d'obtenir une description complète de ces dernières sur un ensemble donné $R$, M. Cuntz a introduit une notion d'irréductibilité (voir \cite{C} et la section suivante) dont le but est de réduire l'étude des solutions de \eqref{a} à un nombre restreint d'éléments. L'objectif naturel est alors de connaître l'ensemble des $\lambda$-quiddités irréductibles sur $R$, ou de façon plus modeste d'avoir des informations sur celles-ci. 
\\
\\ \indent Dans cette optique, on dispose déjà de certains éléments. V. Ovsienko a notamment obtenu une construction récursive des solutions de \eqref{a} sur $\mathbb{N}^{*}$ (voir \cite{O} Théorèmes 1 et 2) tandis qu'un certain nombre de résultats ont été démontrés pour le cas des anneaux $\bZ/N\bZ$ (voir \cite{M1, M3, M4, M5}). Cependant, la plupart des théorèmes de classification concernent des sous-ensembles de $\bC$, en lien avec un problème ouvert soulevé par M. Cuntz (voir \cite{C} problème 4.1). On dispose ainsi d'une description complète des $\lambda$-quiddités irréductibles sur $\mathbb{N}$ (voir \cite{C} Théorème 3.1), sur $\bZ$ (voir \cite{CH} Théorème 6.2) et sur $\bZ[\alpha]$ (voir \cite{M2} Théorème 2.7). Parmi les sous-ensembles de $\bC$ que l'on peut considérer, ceux qui apparaissent à première vue comme les plus simples sont les sous-groupes monogènes. À la lueur de ce constat, plusieurs résultats concernant ces derniers ont déjà été obtenus, notamment pour ceux engendrés par $\sqrt{k}$ (voir \cite{M6}). Afin d'avoir une présentation complète, la plupart de ces résultats seront rappelés dans la section \ref{class}.
\\
\\ \indent Notre objectif dans ce texte est de poursuivre l'étude initiée dans \cite{M6} en obtenant de nouveaux résultats de classification des $\lambda$-quiddités irréductibles sur certains sous-groupes monogènes de $(\bC,+)$. Pour effectuer cela, on commencera par rappeler dans la section \ref{RP} les définitions dont on aura besoin dans la suite et on donnera quelques éléments préliminaires dans la section \ref{pre}. Ensuite, on démontrera dans la section \ref{preuve25} deux premiers résultats de classification utilisant les éléments conjugués d'un nombre algébrique, dont un concernant les $a+b\sqrt{k}$. Pour terminer, on prouvera dans la dernière partie un résultat permettant d'obtenir facilement toutes les $\lambda$-quiddités irréductibles sur un sous-groupe monogène dont le générateur appartient à une région donnée du plan complexe.

\section{Définitions et résultats principaux}
\label{RP}

Dans cette section, on va fournir les éléments essentiels de ce texte. Dans un premier temps, on va rappeler l'ensemble des définitions et des notations dont on aura besoin dans la suite. Dans un second temps, on énoncera les résultats principaux qui seront démontrés dans les sections suivantes. Dans tout ce texte, $R$ est un sous-magma de $\bC$, c'est-à-dire une sous-partie de l'ensemble des nombres complexes stable par addition. Si $w \in \bC$, on note $<w>:=\{kw, k \in \bZ\}$, le sous-groupe de $(\bC,+)$ engendré par $w$. On commence par définir formellement le concept de $\lambda$-quiddité.

\begin{definition}[\cite{C}, définition 2.2]
\label{21}

Soit $n \in \bM$. On dit que le $n$-uplet $(a_{1},\ldots,a_{n})$ d'éléments de $R$ est une $\lambda$-quiddité sur $R$ de taille $n$ si $(a_{1},\ldots,a_{n})$ est une solution de \eqref{a}, c'est-à-dire si $(a_{1},\ldots,a_{n})$ vérifie $M_{n}(a_{1},\ldots,a_{n})=\pm Id.$ En cas d'absence d'ambiguïté, on parlera simplement de $\lambda$-quiddité.

\end{definition}

\noindent En vue de l'étude des $\lambda$-quiddités, on rappelle les deux définitions ci-dessous :

\begin{definition}[\cite{C}, lemme 2.7]
\label{22}

Soient $(n,m) \in (\bM)^{2}$, $(a_{1},\ldots,a_{n})$ un $n$-uplet d'éléments de $R$ et $(b_{1},\ldots,b_{m})$ un $m$-uplet d'éléments de $R$. On définit l'opération suivante: \[(a_{1},\ldots,a_{n}) \oplus (b_{1},\ldots,b_{m}):= (a_{1}+b_{m},a_{2},\ldots,a_{n-1},a_{n}+b_{1},b_{2},\ldots,b_{m-1}).\] Le $(n+m-2)$-uplet ainsi obtenu est appelé la somme de $(a_{1},\ldots,a_{n})$ avec $(b_{1},\ldots,b_{m})$.

\end{definition}

\begin{examples} 

{\rm On considère ici $R=\bZ$. On a :} 
\begin{itemize}
\item $(-1,2,4) \oplus (3,0,1)= (0,2,7,0)$;
\item $(-2,1,3,1) \oplus (2,3,1)= (-1,1,3,3,3)$;
\item $(2,1,0,2) \oplus (1,-3,2,5,1)= (3,1,0,3,-3,2,5)$;
\item $n \geq 2$, $(a_{1},\ldots,a_{n}) \oplus (0,0) = (0,0) \oplus (a_{1},\ldots,a_{n})=(a_{1},\ldots,a_{n})$.
\end{itemize}

\end{examples}

$\oplus$ est une opération non commutative et non associative (voir \cite{WZ} exemple 2.1). En revanche, elle est particulièrement utile pour l'étude des $\lambda$-quiddités car elle vérifie la propriété  remarquable suivante : si $(b_{1},\ldots,b_{m})$ est une $\lambda$-quiddité sur $R$ alors la somme $(a_{1},\ldots,a_{n}) \oplus (b_{1},\ldots,b_{m})$ est une $\lambda$-quiddité sur $R$ si et seulement si $(a_{1},\ldots,a_{n})$ est une $\lambda$-quiddité sur $R$ (voir \cite{C,WZ} et \cite{M1} proposition 3.7).

\begin{definition}[\cite{C}, définition 2.5]
\label{23}

Soient $(a_{1},\ldots,a_{n})$ et $(b_{1},\ldots,b_{n})$ deux $n$-uplets d'éléments de $R$. On dit que $(a_{1},\ldots,a_{n}) \sim (b_{1},\ldots,b_{n})$ si $(b_{1},\ldots,b_{n})$ est obtenu par permutations circulaires de $(a_{1},\ldots,a_{n})$ ou de $(a_{n},\ldots,a_{1})$.

\end{definition}

On vérifie aisément que $\sim$ est une relation d'équivalence sur l'ensemble des $n$-uplets d'éléments de $R$ (voir \cite{WZ}, lemme 1.7). Par ailleurs, si un $n$-uplet d'éléments de $R$ est une solution de \eqref{a} alors tout $n$-uplet d'éléments de $R$ qui lui est équivalent est aussi une solution de \eqref{a} (voir \cite{C} proposition 2.6). Muni de ces définitions, on peut définir la notion d'irréductibilité  précédemment évoquée.

\begin{definition}[\cite{C}, définition 2.9]
\label{24}

Une $\lambda$-quiddité $(c_{1},\ldots,c_{n})$ sur $R$ avec $n \geq 3$ est dite réductible s'il existe une $\lambda$-quiddité $(b_{1},\ldots,b_{l})$ sur $R$ et un $m$-uplet $(a_{1},\ldots,a_{m})$ d'éléments de $R$ tels que \begin{itemize}
\item $(c_{1},\ldots,c_{n}) \sim (a_{1},\ldots,a_{m}) \oplus (b_{1},\ldots,b_{l})$,
\item $m \geq 3$ et $l \geq 3$.
\end{itemize}
Une $\lambda$-quiddité est dite irréductible si elle n'est pas réductible.
\end{definition}

\begin{remark} 

{\rm Dans le cas où $0 \in R$, $(0,0)$ est une $\lambda$-quiddité sur $R$. Cependant, celle-ci n'est jamais considérée comme étant irréductible.}

\end{remark}

Une fois cette notion définie, l'objectif principal de l'étude de l'équation \eqref{a} est d'obtenir une description complète des solutions irréductibles. On possède déjà un certain nombre de résultats de classification des $\lambda$-quiddités irréductibles, dont plusieurs abordent le cas des sous-groupes monogènes (voir \cite{C,M2,M6} ainsi que la section \ref{class}). En particulier, l'un de ces résultats résout entièrement la question de la classification pour le cas des sous-groupes engendrés par un nombre transcendant, c'est-à-dire un nombre qui n'est racine d'aucun polynôme non nul à coefficients entiers. Notre premier objectif est donc naturellement d'avoir des informations pour les sous-groupes engendrés par un nombre algébrique. Cette étude nous permettra notamment de démontrer le résultat suivant.

\begin{theorem}
\label{25}

Soit $w \in \bC$ algébrique. Si $w$ a un élément conjugué de module supérieur à 2 alors l'ensemble des $\lambda$-quiddités irréductibles sur $<w>$ est : \[\{(0,kw,0,-kw), (kw,0,-kw,0); k \in \bZ\}.\]

\end{theorem}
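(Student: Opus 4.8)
The plan is to reduce, by Galois conjugation, to the case where $w$ itself has modulus $>2$, and then to carry out the classification by hand in that case. For the reduction: since $w$ is algebraic, every coefficient of the matrix $M_n(k_1w,\ldots,k_nw)$ with $k_i\in\bZ$ is a polynomial with integer coefficients in the $k_iw$, hence in $w$, so it lies in $\bZ[w]$. Pick a conjugate $w'$ of $w$ with $|w'|>2$ and let $\phi\colon\bQ(w)\to\bC$ be the field embedding with $\phi(w)=w'$; its restriction to $\bZ[w]$ is a ring isomorphism onto $\bZ[w']$ which maps $<w>$ bijectively onto $<w'>$ (by $kw\mapsto kw'$) and commutes with $\oplus$, with the relation $\sim$, with the formation of the matrices $M_n$, and with $\pm Id$. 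Hence $\phi$ and $\phi^{-1}$ send $\lambda$-quiddités to $\lambda$-quiddités and reductions to reductions, so they put the irreducible $\lambda$-quiddités on $<w>$ in bijection with those on $<w'>$. I may therefore assume from now on that $|w|>2$.

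Under this assumption I would record two facts. \emph{(i) There is no $\lambda$-quiddité of size $3$ on $<w>$.} Indeed, a direct computation of $M_3(a,b,c)=\pm Id$ shows that over any unital ring the only solutions are $(1,1,1)$ and $(-1,-1,-1)$, and $\pm1\notin<w>$ since $|w|>2$. \emph{(ii) Every $\lambda$-quiddité of size $n\geq2$ on $<w>$ has at least one zero entry.} If $(a_1,\ldots,a_n)$ has all $a_i\neq0$, then $|a_i|\geq|w|>2$ for every $i$, and the continuant $K_m:=K(a_1,\ldots,a_m)$ (with $K_0=1$, $K_1=a_1$, $K_m=a_mK_{m-1}-K_{m-2}$) satisfies $|K_m|\geq|a_m|\,|K_{m-1}|-|K_{m-2}|>2|K_{m-1}|-|K_{m-1}|=|K_{m-1}|$ by induction, so it never vanishes; but the lower-left coefficient of $M_n(a_1,\ldots,a_n)$ is exactly $K_{n-1}$, so $M_n(a_1,\ldots,a_n)\neq\pm Id$.

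Now let $(c_1,\ldots,c_n)$ be an irreducible $\lambda$-quiddité on $<w>$; then $n\geq3$, hence $n\geq4$ by (i). By (ii), and up to $\sim$, we may assume $c_n=0$. From the elementary identity
\[\begin{pmatrix}a&-1\\1&0\end{pmatrix}\begin{pmatrix}0&-1\\1&0\end{pmatrix}\begin{pmatrix}b&-1\\1&0\end{pmatrix}=-\begin{pmatrix}a+b&-1\\1&0\end{pmatrix}\]
and a direct check on the definition of $\oplus$, one obtains
\[(c_1,\ldots,c_{n-1},0)=(c_1+c_{n-1},c_2,\ldots,c_{n-2})\oplus(0,c_{n-1},0,-c_{n-1}).\]
Since $(0,c_{n-1},0,-c_{n-1})$ is (by the same identity) a $\lambda$-quiddité on $<w>$, the property of $\oplus$ recalled in Section~\ref{RP} shows that $(c_1+c_{n-1},c_2,\ldots,c_{n-2})$ is a $\lambda$-quiddité of size $n-2$. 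If $n=5$ this contradicts (i); if $n\geq6$ the displayed equality exhibits $(c_1,\ldots,c_n)$ as reducible; hence $n=4$. For $n=4$, using (ii) we may take the tuple to be $(a_1,a_2,a_3,0)$, and comparing coefficients in $M_4(a_1,a_2,a_3,0)=\pm Id$ (equivalently in $M_3(a_1,a_2,a_3)$ times the rotation matrix) forces $a_2=0$ and then $a_1=-a_3$. So every $\lambda$-quiddité of size $4$ on $<w>$ is $\sim$-equivalent to $(a_1,0,-a_1,0)$ for some $a_1\in<w>$, and conversely every such tuple is a $\lambda$-quiddité, and it is irreducible because any reduction would involve a $\lambda$-quiddité of size $3$, which does not exist by (i). Taking the $\sim$-classes and transporting back along $\phi^{-1}$ yields exactly $\{(0,kw,0,-kw),(kw,0,-kw,0)\,;\,k\in\bZ\}$, the asserted list.

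The point I expect to require the most care is the reduction step: one must check that $\phi$ transports not merely solvability of \eqref{a} but irreducibility, and in both directions; one also has to be careful with the index book-keeping in the $\oplus$-decomposition of a tuple with a trailing zero, and to deal with the small sizes $n=3,4$ directly rather than via the collapsing argument. The continuant estimate in (ii) and the comparison of coefficients in size $4$ are then routine.
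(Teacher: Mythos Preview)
Your proof is correct and follows the same overall strategy as the paper: reduce via conjugation to the case of a generator of large modulus, then invoke the classification in that case. The paper packages the reduction as Proposition~\ref{41} (arguing with the minimal polynomial: each continuant condition $K_n(k_1X,\ldots,k_nX)=\epsilon$ is a polynomial identity vanishing at $\alpha$, hence at $\beta$), and then simply cites Proposition~\ref{36} for the classification when $|w|\geq 2$. Your formulation of the reduction via a field embedding $\phi:\bQ(w)\hookrightarrow\bC$ is equivalent and arguably cleaner; your verification that $\phi$ transports $\oplus$, $\sim$, and hence irreducibility in both directions is exactly what the paper checks in Proposition~\ref{41}.

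The one substantive difference is that you reprove the large-modulus case from scratch. The paper's Proposition~\ref{36} rests on the Cuntz--Holm result (Theorem~\ref{33}) that every $\lambda$-quiddité has two entries of modulus $<2$; you replace this by the elementary continuant growth estimate $|K_m|>|K_{m-1}|$, which is more direct for this particular application. Your $\oplus$-decomposition $(c_1,\ldots,c_{n-1},0)=(c_1+c_{n-1},c_2,\ldots,c_{n-2})\oplus(0,c_{n-1},0,-c_{n-1})$ is precisely the mechanism behind Proposition~\ref{32}~i). One small remark: the hypothesis is $|w'|\geq 2$ (``sup\'erieur \`a'' in the French convention), not $|w'|>2$; but your induction still works in the boundary case, since $|K_1|\geq 2>1=|K_0|$ gives the strict inequality needed to propagate $|K_m|\geq 2|K_{m-1}|-|K_{m-2}|>|K_{m-1}|$ even when $|a_m|=2$.
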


\noindent Avec ce résultat, on déduira le résultat de classification ci-dessous :

\begin{theorem}
\label{26}

Soit $(a,b,k) \in \bZ^{3}$ avec $a \neq 0$, $b \neq 0$ et $k \geq 2$. On suppose que $a+b\sqrt{k} \neq \pm 1$. L'ensemble des $\lambda$-quiddités irréductibles sur $<a+b\sqrt{k}>$ est :
\[\{(0,l(a+b\sqrt{k}),0,-l(a+b\sqrt{k})), (l(a+b\sqrt{k}),0,-l(a+b\sqrt{k}),0), l \in \bZ\}.\]

\end{theorem}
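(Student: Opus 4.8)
The plan is to obtain Theorem~\ref{26} as an immediate consequence of Theorem~\ref{25}: it suffices to check that $w:=a+b\sqrt{k}$ is an algebraic number possessing a conjugate of modulus at least $2$, after which the set of irreducible $\lambda$-quiddités furnished by Theorem~\ref{25} is exactly the one in the statement (with the integer index simply renamed $l$).

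First I would handle the main case, where $k$ is not a perfect square. Then $\sqrt{k}$ is irrational, so $X^{2}-2aX+(a^{2}-b^{2}k)$ has integer coefficients, is irreducible over $\bQ$ (its discriminant $4b^{2}k$ is not a rational square since $b\neq 0$ and $k$ is not a square), and is therefore the minimal polynomial of $w$; hence the conjugates of $w$ are $a+b\sqrt{k}$ and $a-b\sqrt{k}$. Since $\max(|a+b\sqrt{k}|,|a-b\sqrt{k}|)=|a|+|b|\sqrt{k}$ and $a\neq 0$, $b\neq 0$, $k\geq 2$, this maximum is at least $1+\sqrt{2}>2$. Thus $w$ has a conjugate of modulus strictly greater than $2$, and Theorem~\ref{25} applies and yields precisely the announced description of the irreducible $\lambda$-quiddités on $<w>$ (note that in this case $w$ is automatically irrational, so the hypothesis $a+b\sqrt{k}\neq\pm 1$ is not needed here).

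Next I would treat the case $k=d^{2}$ with $d\geq 2$, in which $w=a+bd\in\bZ$ and $<w>=w\bZ$. If $w\neq 0$, then $w$ is its own unique conjugate, and being a nonzero integer distinct from $\pm 1$ by hypothesis it satisfies $|w|\geq 2$, so Theorem~\ref{25} applies again. The only remaining possibility is $w=0$ (which genuinely occurs, for instance when $(a,b,k)=(-2,1,4)$); then $<w>=\{0\}$, and I would argue directly from Definitions~\ref{22} and~\ref{24} that the unique irreducible $\lambda$-quiddité on $\{0\}$ is $(0,0,0,0)$. Indeed, no tuple of odd length is a $\lambda$-quiddité on $\{0\}$, $(0,0)$ is excluded by convention, $(0,0,0)$ is not a $\lambda$-quiddité, and every $(0,\ldots,0)$ of even length $\geq 6$ is reducible as $(0,\ldots,0)\oplus(0,0,0,0)$; this is exactly the set in the statement since $lw=0$ for all $l\in\bZ$.

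Overall the deduction is short, and the main obstacle, such as it is, is not analytic but organizational: one must determine the conjugates of $w$ correctly, use the elementary but essential inequality $|a|+|b|\sqrt{k}>2$, and carry out the bookkeeping for the degenerate sub-cases where $k$ is a perfect square --- it is precisely there, in order to guarantee $|w|\geq 2$ when $w$ is a nonzero rational integer, that the hypothesis $a+b\sqrt{k}\neq\pm 1$ is used.
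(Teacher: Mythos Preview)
Your argument is correct and follows essentially the same route as the paper: in both cases one reduces Theorem~\ref{26} to Theorem~\ref{25} (or its special case Proposition~\ref{36}) by exhibiting a conjugate of $w=a+b\sqrt{k}$ of modulus at least~$2$, with separate handling of the degenerate situation $w\in\bZ$. Your organisation is slightly tidier --- using $\max(|a+b\sqrt{k}|,|a-b\sqrt{k}|)=|a|+|b|\sqrt{k}\geq 1+\sqrt{2}$ avoids the paper's WLOG on the sign of $a$ and the subsequent split on the sign of $b$ --- but the substance is the same.
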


Ces deux théorèmes sont démontrés dans la section \ref{preuve25}. Notons par ailleurs que les résultats de classification déjà connus (voir section \ref{class}) permettent de considérer les cas exclus dans le théorème précédent et donc de régler entièrement la question de la classification pour ces sous-groupes.
\\
\\ \indent On s'intéressera pour terminer aux cas de certains nombres complexes non réels en démontrant le résultat ci-dessous :

\begin{theorem}
\label{27}

Soit $w=a+ib \in \bC$. On suppose que $\left|ab\right| \geq 1$. L'ensemble des $\lambda$-quiddités irréductibles sur $<w>$ est :
\[\{(0,kw,0,-kw), (kw,0,-kw,0); k \in \bZ\}.\]

\end{theorem}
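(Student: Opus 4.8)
The plan is to obtain a full classification, in the spirit of the other results of the paper: first check that the two listed families consist of irreducible $\lambda$-quiddités on $\langle w\rangle$, then settle sizes $\le 4$ by direct computation, and finally rule out any irreducible $\lambda$-quiddité of size $n\ge 5$. The hypothesis $|ab|\ge 1$ will be used mainly through its consequences $a^2+b^2\ge 2|ab|\ge 2$ (so $|w|\ge\sqrt 2$), $b\ne 0$ (so $w\notin\bR$), and $ab\ne 0$ (so $w^2\notin\bR$, in particular $w^2\notin\bQ$).

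A direct evaluation of $M_4$ shows that $(0,kw,0,-kw)$ and $(kw,0,-kw,0)$ are $\lambda$-quiddités on $\langle w\rangle$; these two tuples are in fact circular permutations of one another. Since $w\notin\bR$ we have $1\notin\langle w\rangle$, and as the only $\lambda$-quiddités of size $3$ over $\bC$ are $(1,1,1)$ and $(-1,-1,-1)$, there are none on $\langle w\rangle$; sizes $1$ and $2$ yield nothing beyond $(0,0)$. A $\lambda$-quiddité of size $4$ can be reducible only as a $\oplus$-sum of two $\lambda$-quiddités of size $3$, so each member of the two families is irreducible. Finally, writing the entries of a size-$4$ solution as $k_iw$ and identifying the four coefficients of $M_4(k_1w,\dots,k_4w)=\pm Id$ through the continuant formulas, one finds only these two families together with tuples of the shape $\bigl(\tfrac{2}{k_2w},k_2w,\tfrac{2}{k_2w},k_2w\bigr)$, which would force $w^2\in\bQ$ and are therefore excluded. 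This completes the classification in sizes $\le 4$.

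The core of the proof, and the step I expect to be the main obstacle, is the nonexistence of an irreducible $\lambda$-quiddité of size $n\ge 5$. I would argue by contradiction, fixing such a $\lambda$-quiddité $(c_1,\dots,c_n)=(k_1w,\dots,k_nw)$ with $n$ minimal. Writing $K_m$ for the continuant polynomial, one has $M_n(c_1,\dots,c_n)=\left(\begin{smallmatrix}K_n(c_1,\dots,c_n)&-K_{n-1}(c_2,\dots,c_n)\\ K_{n-1}(c_1,\dots,c_{n-1})&-K_{n-2}(c_2,\dots,c_{n-1})\end{smallmatrix}\right)$ and $\det M_n=1$, so $M_n=\varepsilon\,Id$ ($\varepsilon\in\{1,-1\}$) is equivalent to: every cyclically consecutive continuant of length $n-1$ vanishes, every one of length $n-2$ equals $-\varepsilon$, and $K_n(c_1,\dots,c_n)=\varepsilon$. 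Expanding a vanishing length-$(n-1)$ continuant from its end and using that the flanking length-$(n-2)$ continuant equals $-\varepsilon$ gives, for every $j$ (indices mod $n$), the key relation $c_j=-\varepsilon\,K_{n-3}(c_{j+2},c_{j+3},\dots,c_{j+n-2})$. If some $k_i=0$, then $c_i=0$; but a $\lambda$-quiddité of size $n\ge 5$ with a zero entry is reducible --- after a suitable rotation it equals $(d_2+d_n,d_3,\dots,d_{n-1})\oplus(0,d_n,0,-d_n)$, a decomposition into pieces of sizes $n-2\ge 3$ and $4$ --- contradicting irreducibility. Hence all $k_i$ are nonzero, the key relation becomes a genuine polynomial identity for $w$ with integer coefficients and nonzero leading coefficient $\pm k_{j+2}\cdots k_{j+n-2}$, so $w$ (equivalently $w^2$) is algebraic of low degree, with every conjugate satisfying the same relation.

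It remains to extract a contradiction, and this is where the real work lies. In the smallest cases it is already clean: for $n=5$ the relation reads $c_1=-\varepsilon(c_3c_4-1)$, so $w$ is a root of a monic rational quadratic with constant term $-1/(k_3k_4)$; since $w\notin\bR$ this is its minimal polynomial and its other root is $\bar w$, whence $a^2+b^2=|w|^2=1/|k_3k_4|\le 1$, contradicting $a^2+b^2\ge 2$. For $n=6$ the relation $c_1=-\varepsilon(c_3c_4c_5-c_3-c_5)$, divided by $w$, gives $w^2\in\bQ$, again a contradiction. For general $n\ge 7$ the plan is to combine the relations $c_j=-\varepsilon K_{n-3}(\dots)$ for several values of $j$ together with the length-$(n-2)$ continuant conditions, reducing everything to a finite system of Diophantine equations in the $k_i$ that has no solution once $a^2+b^2\ge 2$ and $w^2\notin\bQ$; carrying this out uniformly in $n$ --- presumably encapsulated in the ``region'' statement announced in the introduction --- is the part I expect to require the most care, since the estimate coming from a single relation only pins down $w$ (or $w^2$) up to a conjugate that is too small rather than producing the contradiction outright.
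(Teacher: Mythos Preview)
Your argument is incomplete: the case $n\ge 7$ is explicitly left open, and the route you sketch there does not close. The relation $c_j=-\varepsilon K_{n-3}(c_{j+2},\dots,c_{j+n-2})$ gives a polynomial equation for $w$ of degree $n-3$, and nothing forces the other roots to be large or to be $\bar w$; the clean finishes at $n=5$ and $n=6$ work precisely because the degree in $w$ (resp.\ $w^2$) is at most two (resp.\ one), which fails from $n=7$ on. Your hope that the announced ``region'' result packages this uniformly is circular: that result \emph{is} the theorem you are trying to prove.

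The paper's proof is completely different and is uniform in $n$. It first reduces, via Proposition~\ref{39} (square an odd-length solution), to showing that every \emph{even}-length $\lambda$-quiddité $(k_1w,\dots,k_{2n}w)$ on $\langle w\rangle$ has a zero entry. The key device is a parity rearrangement of the continuants: grouping each factor $k_jw$ at an odd position with an adjacent even-position factor yields
\[
K_{2n}(k_1w,k_2w,\dots,k_{2n}w)=K_{2n}(k_1,\,k_2w^2,\,k_3,\,k_4w^2,\dots,k_{2n-1},\,k_{2n}w^2),
\]
and similarly for the other three entries of $M_{2n}$, so that $(k_1,k_2w^2,k_3,k_4w^2,\dots)$ is itself a $\lambda$-quiddité over $\bC$. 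One then applies the Cuntz--Holm bound (Théorème~\ref{33}) twice. First: since $|k_jw^2|\ge|w|^2=a^2+b^2\ge 2$, the entry of modulus $<2$ must be some odd-position integer $k_j=\pm1$; collapse all such entries using $M_3(x,\pm1,y)=\pm M_2(x\mp1,y\mp1)$. Second: the resulting shorter $\lambda$-quiddité has entries only of the form $k_j$ with $|k_j|\ge 2$ or $k_jw^2+r$ with $r\in\{-2,-1,0,1,2\}$, and for the latter the imaginary part alone gives $|k_jw^2+r|\ge 2|k_j|\,|ab|\ge 2$. Every entry thus has modulus $\ge 2$, contradicting Théorème~\ref{33}. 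This is where the hypothesis $|ab|\ge 1$ is actually used, and it replaces your unfinished induction on $n$ by a single size-independent estimate.
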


\noindent Ce théorème est prouvé dans la section \ref{preuve27}.

\section{Résultats préliminaires}
\label{pre}

Le but de cette partie est de collecter un certain nombre d'éléments utiles pour la suite et d'énoncer plusieurs théorèmes de classification précédemment évoqués. On en profitera également pour fournir quelques éléments plus généraux sur les solutions de l'équation \eqref{a}.

\subsection{Premiers résultats}
\label{Rp}

On débute cette section en donnant la liste des $\lambda$-quiddités sur $\bC$ lorsque $1 \leq n \leq 4$.

\begin{proposition}[\cite{CH}, exemple 2.7]
\label{31}
\begin{itemize}
\item \eqref{a} n'a pas de solution de taille 1.
\item $(0,0)$ est la seule solution de \eqref{a} de taille 2.
\item $(1,1,1)$ et $(-1,-1,-1)$ sont les seules solutions de \eqref{a} de taille 3.
\item Les solutions de \eqref{a} pour $n=4$ sont les 4-uplets suivants $(-a,b,a,-b)$ avec $ab=0$ (c'est-à-dire $a=0$ ou $b=0$) et $(a,b,a,b)$ avec $ab=2$.
\end{itemize}
 
\end{proposition}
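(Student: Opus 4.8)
The plan is to settle all four assertions by directly computing the matrix $M_n(a_1,\ldots,a_n)$ for $n \in \{1,2,3,4\}$ and solving the equation $M_n = \pm Id$ entry by entry, using the recursion
\[M_n(a_1,\ldots,a_n) = \begin{pmatrix} a_n & -1 \\ 1 & 0 \end{pmatrix} M_{n-1}(a_1,\ldots,a_{n-1}).\]
A preliminary observation shortens every case: each elementary factor has determinant $1$, so $M_n \in SL_2(\bC)$ for all $n$; hence, to decide whether $M_n = \pm Id$ it suffices to impose that the two off-diagonal entries vanish and that the bottom-right entry equals some $\varepsilon \in \{1,-1\}$. Indeed, once $M_n = \begin{pmatrix} p & 0 \\ 0 & \varepsilon\end{pmatrix}$, the identity $\det M_n = 1$ gives $p\varepsilon = 1$, i.e. $p = \varepsilon$, so the top-left entry is automatically correct. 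This removes any need to verify the $(1,1)$-entry in the discussion below.

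For $n = 1$ one has $M_1 = \begin{pmatrix} a_1 & -1 \\ 1 & 0\end{pmatrix}$, whose $(2,1)$-entry equals $1$, so \eqref{a} has no solution. For $n = 2$, $M_2 = \begin{pmatrix} a_1 a_2 - 1 & -a_2 \\ a_1 & -1 \end{pmatrix}$; the bottom-right entry is $-1$, forcing the sign $-Id$, and cancelling the off-diagonal entries yields $a_1 = a_2 = 0$, i.e. the unique solution $(0,0)$. For $n = 3$ the recursion gives
\[M_3 = \begin{pmatrix} a_1a_2a_3 - a_1 - a_3 & 1 - a_2a_3 \\ a_1a_2 - 1 & -a_2 \end{pmatrix};\]
the bottom-right entry $-a_2 = \varepsilon$ fixes $a_2 = -\varepsilon$, and then $a_1 a_2 - 1 = 0$ together with $1 - a_2 a_3 = 0$ force $a_1 = a_3 = a_2$, producing exactly $(1,1,1)$ (for $\varepsilon = -1$) and $(-1,-1,-1)$ (for $\varepsilon = 1$).

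The only case requiring real care is $n = 4$. One further multiplication gives
\[M_4 = \begin{pmatrix} a_1a_2a_3a_4 - a_1a_4 - a_3a_4 - a_1a_2 + 1 & a_2 + a_4 - a_2a_3a_4 \\ a_1a_2a_3 - a_1 - a_3 & 1 - a_2a_3 \end{pmatrix}.\]
The bottom-right entry $1 - a_2 a_3$ is decisive: it equals $+1$ exactly when $a_2 a_3 = 0$ (forcing the sign $+Id$) and $-1$ exactly when $a_2 a_3 = 2$ (forcing $-Id$). In the first subcase the cubic term in each off-diagonal entry drops out, and the vanishing of the $(2,1)$- and $(1,2)$-entries reduces to $a_3 = -a_1$ and $a_4 = -a_2$, while the constraint $a_2 a_3 = 0$ reads $a_1 a_2 = 0$; reparametrising by $a = -a_1$ and $b = a_2$ yields the family $(-a, b, a, -b)$ with $ab = 0$. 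In the second subcase $a_2 a_3 = 2$ collapses the off-diagonal equations to $a_3 = a_1$ and $a_4 = a_2$, with $a_1 a_2 = 2$, i.e. the family $(a, b, a, b)$ with $ab = 2$. By the determinant remark above, the top-left entries need no separate check.

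The main obstacle is purely computational and confined to $n = 4$: one must expand the four polynomial entries of $M_4$ correctly and then organise the analysis around the single scalar $1 - a_2 a_3$, whose two admissible values cleanly split the solutions into the two announced families. Everything after that dichotomy is linear and routine.
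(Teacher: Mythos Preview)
Your proof is correct in every detail; the computations of $M_n$ for $n\le 4$ are accurate, the determinant observation legitimately spares you the $(1,1)$-entry, and the dichotomy on $1-a_2a_3$ for $n=4$ is handled cleanly. Note, however, that the paper itself does not prove this proposition: it is quoted from \cite{CH}, exemple 2.7, and stated without argument, so there is no proof in the paper to compare your approach against---your direct matrix computation is exactly the kind of elementary verification one would expect for such a result.
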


Pour obtenir les $\lambda$-quiddités de petite taille sur un sous-magma $R$ de $\bC$, il suffit d'adapter la proposition précédente aux éléments appartenant à $R$. Par exemple, si $0 \notin R$ alors il n'existe pas de $\lambda$-quiddité de taille 2 sur $R$. 

\begin{proposition}[\cite{M6}, proposition 3.3]
\label{32}

Soient $G$ un sous-groupe de $\bC$. 
\\i) Une $\lambda$-quiddité sur $G$ de taille supérieure à 5 contenant 0 est réductible.
\\ii) Si $1 \notin G$ alors les $\lambda$-quiddités sur $G$ de taille 4 sont irréductibles.
 
\end{proposition}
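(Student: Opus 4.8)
The plan is to treat the two assertions separately, both times leaning on the classification of small $\lambda$-quiddités of Proposition~\ref{31} together with the compatibility of $\oplus$ with the $\lambda$-quiddité property recalled just after Definition~\ref{22}: if $(b_1,\ldots,b_l)$ is a $\lambda$-quiddité, then $(a_1,\ldots,a_m)\oplus(b_1,\ldots,b_l)$ is a $\lambda$-quiddité if and only if $(a_1,\ldots,a_m)$ is one. The only structural feature of $G$ beyond stability under addition that will matter is closure under negation.

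For i), the idea is to peel off a size-$4$ block of the form $(-u,0,u,0)$, which is a $\lambda$-quiddité by Proposition~\ref{31} (it is of type $(-a,b,a,-b)$ with $b=0$). Let $(c_1,\ldots,c_n)$ be a $\lambda$-quiddité on $G$ with $n\geq 5$ and $c_i=0$ for some $i$. Since circular permutations preserve the equivalence class and act transitively on positions, one may assume, after replacing $(c_1,\ldots,c_n)$ by an equivalent tuple, that the zero sits in position $n-1$, i.e. that we work with $(c_1,\ldots,c_{n-2},0,c_n)$. Setting $u:=c_n$, $a_{n-2}:=c_{n-2}+c_n$ and $a_j:=c_j$ for $1\leq j\leq n-3$, a direct expansion of the definition of $\oplus$ should give
\[(a_1,\ldots,a_{n-2})\oplus(-u,0,u,0)=(c_1,\ldots,c_{n-2},0,c_n).\]
All the $a_j$ lie in $G$ since it is stable under addition, and by the compatibility property above $(a_1,\ldots,a_{n-2})$ is then itself a $\lambda$-quiddité on $G$. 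Because the first block has size $n-2\geq 3$ and the second has size $4\geq 3$, this exhibits $(c_1,\ldots,c_n)$ as reducible.

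For ii), suppose for contradiction that a size-$4$ $\lambda$-quiddité $(c_1,c_2,c_3,c_4)$ on $G$ is reducible, say $(c_1,c_2,c_3,c_4)\sim(a_1,\ldots,a_m)\oplus(b_1,\ldots,b_l)$ with $m,l\geq 3$. Since the size of a sum is $m+l-2$, the constraint $m+l-2=4$ together with $m,l\geq 3$ forces $m=l=3$. In particular $(b_1,b_2,b_3)$ is a $\lambda$-quiddité on $G$ of size $3$, so by Proposition~\ref{31} it equals $(1,1,1)$ or $(-1,-1,-1)$; either way $1\in G$ or $-1\in G$, hence $1\in G$ because $G$ is a group. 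This contradicts the hypothesis $1\notin G$, so no size-$4$ $\lambda$-quiddité on $G$ can be reducible.

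The size bookkeeping in ii) and the expansion of $\oplus$ in i) are routine. The step demanding the most care is the reduction in i): one must verify that the chosen block $(-u,0,u,0)$ genuinely has all its coordinates in $G$ — this is precisely where closure under negation is invoked to ensure $-c_n\in G$ — and that it is indeed a $\lambda$-quiddité, so that the compatibility property applies and promotes $(a_1,\ldots,a_{n-2})$ to a $\lambda$-quiddité. Beyond correctly positioning the zero via $\sim$ and selecting the block $(-u,0,u,0)$ so that the append-and-correct mechanism of $\oplus$ reproduces exactly the tuple with a zero in position $n-1$, I anticipate no real obstacle.
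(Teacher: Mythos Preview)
Your argument is correct. For part~i), the decomposition $(a_1,\ldots,a_{n-2})\oplus(-u,0,u,0)$ with $u=c_n$ and $a_{n-2}=c_{n-2}+c_n$ expands exactly as you claim, the block $(-u,0,u,0)$ is a $\lambda$-quiddité on $G$ (here is where closure under negation and the presence of $0\in G$ are used), and the size conditions $n-2\geq 3$, $4\geq 3$ are met. One minor remark: to exhibit reducibility you only need $(b_1,\ldots,b_l)$ to be a $\lambda$-quiddité and $(a_1,\ldots,a_m)$ to be a tuple in $G$; invoking the compatibility of $\oplus$ to deduce that $(a_1,\ldots,a_{n-2})$ is itself a $\lambda$-quiddité is true but superfluous for the conclusion. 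Part~ii) is clean: the size constraint forces $m=l=3$, Proposition~\ref{31} then forces $\pm 1\in G$, contradicting $1\notin G$.

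As for comparison with the paper: the paper does not supply a proof of this proposition but simply cites it from~\cite{M6}, so there is no in-paper argument to compare against. Your approach is the natural one and matches what one would expect the cited proof to contain.
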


Afin de classifier les solutions de \eqref{a}, on a besoin d'informations sur les composantes de ces dernières. Pour cela, on dispose notamment du théorème ci-dessous qui est un ingrédient central des preuves de la plupart des résultats de classification sur les sous-ensembles de $\bC$ :

\begin{theorem}[Cuntz-Holm, \cite{CH} corollaire 3.3]
\label{33}

Soit $(a_{1},\ldots,a_{n}) \in \bC^{n}$ une $\lambda$-quiddité. Il existe $(i,j)$ dans $[\![1;n]\!]^{2}$, $i \neq j$, tels que $\left|a_{i}\right| < 2$ et $\left|a_{j}\right| < 2$.

\end{theorem}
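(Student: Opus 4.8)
Le plan est de traduire la condition matricielle $M_{n}(a_{1},\ldots,a_{n})=\pm Id$ en termes de la récurrence de type continuant vérifiée par les colonnes du produit, puis de montrer qu'un bloc d'entrées consécutives toutes de module supérieur ou égal à $2$ force une entrée hors-diagonale à être non nulle. Posons $M(a)=\begin{pmatrix} a & -1 \\ 1 & 0\end{pmatrix}$, de sorte que $M_{k}(a_{1},\ldots,a_{k})=M(a_{k})M_{k-1}(a_{1},\ldots,a_{k-1})$, et notons $\begin{pmatrix} u_{k} \\ v_{k}\end{pmatrix}$ la première colonne de $M_{k}(a_{1},\ldots,a_{k})$. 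La relation $M_{k}=M(a_{k})M_{k-1}$ donne $v_{k}=u_{k-1}$ et $u_{k}=a_{k}u_{k-1}-u_{k-2}$ pour $k\geq 2$, avec $u_{0}=1$ et $u_{1}=a_{1}$. Comme $\pm Id$ est diagonale, l'équation $M_{n}=\pm Id$ impose en particulier que l'entrée en position $(2,1)$ de $M_{n}$, c'est-à-dire $v_{n}=u_{n-1}$, soit nulle.

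Le cœur de la preuve est la propriété de monotonie suivante: si $|a_{i}|\geq 2$ pour $1\leq i\leq m$, alors $|u_{m}|>|u_{m-1}|$ pour tout $m\geq 1$, et en particulier $u_{m}\neq 0$. On procède par récurrence. Le cas initial est $|u_{1}|=|a_{1}|\geq 2>1=|u_{0}|$. Pour l'hérédité, on utilise l'inégalité triangulaire renversée:
\[|u_{m}|=|a_{m}u_{m-1}-u_{m-2}|\geq |a_{m}|\,|u_{m-1}|-|u_{m-2}|\geq 2|u_{m-1}|-|u_{m-2}|>|u_{m-1}|,\]
la dernière inégalité provenant de l'hypothèse de récurrence $|u_{m-2}|<|u_{m-1}|$.

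Pour conclure, supposons par l'absurde qu'au plus un des modules $|a_{1}|,\ldots,|a_{n}|$ soit strictement inférieur à $2$. On rappelle qu'être une solution de \eqref{a} est stable par la relation $\sim$, donc en particulier par permutation circulaire (voir la discussion suivant la Définition \ref{23}); cela traduit le fait que $\pm Id$ est central, de sorte qu'une rotation circulaire conjugue le produit et laisse $\pm Id$ inchangé. On peut donc remplacer $(a_{1},\ldots,a_{n})$ par une $\lambda$-quiddité équivalente dans laquelle l'unique entrée éventuellement petite occupe la dernière position, de sorte que $|a_{i}|\geq 2$ pour tout $1\leq i\leq n-1$. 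Le lemme de monotonie appliqué à $a_{1},\ldots,a_{n-1}$ donne alors $u_{n-1}\neq 0$, ce qui contredit $u_{n-1}=0$. Il existe donc au moins deux indices en lesquels le module est $<2$. (Pour $n=1$ il n'y a aucune $\lambda$-quiddité d'après la Proposition \ref{31}, et l'énoncé est vide.)

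Les calculs sont élémentaires; les deux points demandant de la vigilance sont la stricte croissance dans le lemme de monotonie — c'est elle qui fait passer de \og au moins une petite entrée \fg{} à \og au moins deux \fg{} — et l'emploi de l'invariance par permutation circulaire pour garantir que les $n-1$ premières entrées peuvent être supposées toutes de module au moins $2$. L'ingrédient réellement essentiel reste l'inégalité triangulaire renversée, qui est précisément l'endroit où intervient le seuil $2$.
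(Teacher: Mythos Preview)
Your argument is correct. The present paper does not give a proof of this theorem --- it is quoted from \cite{CH} (corollaire 3.3) as a preliminary result and used without proof --- so there is no in-paper argument to compare against; your approach via the strict growth $|u_m|>|u_{m-1}|$ of the first-column continuants under the hypothesis $|a_i|\geq 2$, combined with the cyclic-permutation trick to move any single small entry to position $n$, is the standard one and is essentially the proof given in \cite{CH}.
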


\begin{remarks}

{\rm i) La constante 2 donnée dans le théorème précédent est optimale. En effet, pour tout $\epsilon \in ]0,2]$, on peut construire une $\lambda$-quiddité sur $\bR$ dont toutes les composantes sont de module supérieur à $2-\epsilon$ (voir \cite{M2} proposition 3.5).
\\
\\ii) En revanche, il n'existe pas de constante $s$ pour laquelle il existe nécessairement une composante plus grande que $s$, même si ces dernières sont toutes non nulles. En effet, posons pour tout $n \geq 2$, $v_{n}:=2{\rm sin}(\frac{\pi}{n})$ et $B_{n}:=\begin{pmatrix}
   v_{n} & -1 \\[4pt]
    1    & 0 
   \end{pmatrix}$.
\\
\\Soit $\epsilon > 0$. On a $\lim\limits_{n \rightarrow +\infty} v_{n}=0$. Ainsi, il existe $n \in \bN$, $n \geq 3$, tel que $0~<~v_{n}~<~\epsilon$. Le polynôme caractéristique de $B_{n}$ est $\chi_{B_{n}}(X)={\rm det}(B_{n}-XId)=X^{2}-v_{n}X+1$. Le discriminant de ce polynôme est $\Delta=v_{n}^{2}-4~<~0$. Donc, $\chi_{B_{n}}$ a deux racines complexes conjuguées $x_{1}$ et $x_{2}$ avec :

\begin{eqnarray*}
x_{1} &=& \frac{v_{n}+i\sqrt{4-v_{n}^{2}}}{2} \\
      &=& {\rm sin}\left(\frac{\pi}{n}\right)+i\sqrt{\left({\rm cos}\left(\frac{\pi}{n}\right)\right)^{2}} \\
			&=& {\rm sin}\left(\frac{\pi}{n}\right)+i~{\rm cos}\left(\frac{\pi}{n}\right)~{\rm (car }~{\rm cos}\left(\frac{\pi}{n}\right) \geq 0) \\
			&=& i e^{-\frac{i\pi}{n}}. \\
\end{eqnarray*}

\noindent $B_{n}$ a deux valeurs propres distinctes : $i e^{-\frac{i\pi}{n}}$ et $-i e^{\frac{i\pi}{n}}$. Ainsi, $B_{n}$ est diagonalisable et $(B_{n})^{2n}=(-1)^{n}Id$. Donc, le $2n$-uplet formé uniquement de $2{\rm sin}(\frac{\pi}{n})$ est une $\lambda$-quiddité sur $\bR$ dont toutes les composantes sont non nulles et inférieures à $\epsilon$.
}

\end{remarks}

Dans la suite, on utilisera également une formule donnant les coefficients de la matrice $M_{n}(a_{1},\ldots,a_{n})$ à l'aide de déterminants. Pour cela, on donne les notations suivantes :
\\
\\On pose $K_{-1}:=0$ et $K_{0}:=1$. Soient $n \in \bM$ et $(a_{1},\ldots,a_{n}) \in \bC^{n}$. On note \[K_{n}(a_{1},\ldots,a_{n}):=
\left|
\begin{array}{cccccc}
a_1&1&&&\\[4pt]
1&a_{2}&1&&\\[4pt]
&\ddots&\ddots&\!\!\ddots&\\[4pt]
&&1&a_{n-1}&\!\!\!\!\!1\\[4pt]
&&&\!\!\!\!\!1&\!\!\!\!a_{n}
\end{array}
\right|.\] $K_{n}(a_{1},\ldots,a_{n})$ est le continuant de $a_{1},\ldots,a_{n}$. On dispose de l'égalité suivante (voir \cite{CO}) : Soient $n \in \bM$ et $(a_{1},\ldots,a_{n}) \in \bC^{n}$.
\[M_{n}(a_{1},\ldots,a_{n})=\begin{pmatrix}
    K_{n}(a_{1},\ldots,a_{n}) & -K_{n-1}(a_{2},\ldots,a_{n}) \\
    K_{n-1}(a_{1},\ldots,a_{n-1})  & -K_{n-2}(a_{2},\ldots,a_{n-1}) 
   \end{pmatrix}.\]

Pour obtenir une expression du continuant sous la forme d'un polynôme, on peut utiliser un algorithme simple nommé algorithme d'Euler. Celui-ci fonctionne de la manière suivante (voir par exemple \cite{CO} section 2). $K_{n}(a_{1},\ldots,a_{n})$ est la somme de tous les produits possibles de $a_{1},\ldots,a_{n}$ dans lesquels un nombre quelconque de paires disjointes de termes consécutifs est supprimé. Chacun de ces produits étant multiplié par (-1) puissance le nombre de paires supprimées. On considère donc d'abord le produit $a_{1} \times \ldots \times a_{n}$. Puis, on soustrait tous les produits de la forme $a_{1} \times \ldots a_{i-1} \times a_{i+2} \times \ldots \times a_{n}$. Après, on ajoute tous les produits possibles de $a_{1},\ldots,a_{n}$, dans lesquels deux paires disjointes de termes consécutifs ont été supprimées et on poursuit de la même façon. 
\\
\\Cela donne, par exemple, pour $n=4$ : $K_{4}(a_{1},a_{2},a_{3},a_{4})=a_{1}a_{2}a_{3}a_{4}-a_{3}a_{4}-a_{1}a_{4}-a_{1}a_{2}+1$.

\subsection{Résultats de classification}
\label{class}

Comme on l'a évoqué dans les sections précédentes, on dispose déjà d'un certain nombre de théorèmes de classification sur des sous-ensembles $R$ de $\bC$. Dans les cas où $R$ est un sous-anneau, on a les deux résultats ci-dessous :

\begin{theorem}[Cuntz-Holm, \cite{CH} Théorème 6.2]
\label{34}

L'ensemble des $\lambda$-quiddités irréductibles sur $\mathbb{Z}$ est : \[\{(1,1,1), (-1,-1,-1), (0,m,0,-m), (m,0,-m,0); m \in \mathbb{Z}-\{\pm 1\} \}.\]

\end{theorem}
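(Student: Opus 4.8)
Le plan est de combiner le Théorème \ref{33}, qui garantit l'existence de petites composantes, avec la propriété de stabilité de $\oplus$ rappelée après la Définition \ref{22}, afin de ramener toute $\lambda$-quiddité de grande taille à une somme. Je commencerais par vérifier que les éléments annoncés sont bien des $\lambda$-quiddités irréductibles sur $\bZ$. Les triplets $(1,1,1)$ et $(-1,-1,-1)$ sont des $\lambda$-quiddités par la Proposition \ref{31}, et ils sont irréductibles car toute $\lambda$-quiddité réductible est de taille $m+l-2 \geq 4$. Les quadruplets $(0,m,0,-m)$ et $(m,0,-m,0)$ sont des $\lambda$-quiddités (cas $ab=0$ de la Proposition \ref{31}). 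Pour en établir l'irréductibilité lorsque $m \neq \pm 1$, j'observerais qu'une $\lambda$-quiddité réductible de taille $4$ s'écrit nécessairement $(a_1,a_2,a_3) \oplus (b_1,b_2,b_3)$ : en effet $m+l-2=4$ avec $m,l \geq 3$ impose $m=l=3$, et alors $(b_1,b_2,b_3)$ est une $\lambda$-quiddité de taille $3$ tandis que $(a_1,a_2,a_3)$ en est une aussi (par la propriété de $\oplus$), et tous deux valent donc $(1,1,1)$ ou $(-1,-1,-1)$. Les seules $\lambda$-quiddités réductibles de taille $4$ sont par conséquent, à équivalence près, $(2,1,2,1)$, $(0,1,0,-1)$, $(0,-1,0,1)$ et $(-2,-1,-2,-1)$, dont aucune n'est de la forme voulue pour $m \neq \pm 1$.

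Pour la réciproque, l'essentiel de la preuve consiste à montrer qu'aucune $\lambda$-quiddité irréductible sur $\bZ$ n'est de taille $\geq 5$. Soit $(a_1,\ldots,a_n)$ une $\lambda$-quiddité avec $n \geq 5$. Par le Théorème \ref{33}, elle possède au moins deux composantes de module strictement inférieur à $2$, donc appartenant à $\{-1,0,1\}$. Si l'une d'elles est nulle, $(a_1,\ldots,a_n)$ est réductible d'après la Proposition \ref{32} i). Sinon une composante vaut $1$ ou $-1$ et, quitte à appliquer une permutation circulaire (licite par la Définition \ref{23}, qui préserve aussi bien le caractère de $\lambda$-quiddité que celui de réductibilité), on peut supposer $a_n=1$ (resp. $a_n=-1$). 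Un calcul direct à partir de la Définition \ref{22} fournit alors l'identité
\[(a_1,\ldots,a_n) = (a_1-1,a_2,\ldots,a_{n-2},a_{n-1}-1) \oplus (1,1,1)\]
(resp. $(a_1+1,a_2,\ldots,a_{n-2},a_{n-1}+1) \oplus (-1,-1,-1)$). Comme $(1,1,1)$ (resp. $(-1,-1,-1)$) est une $\lambda$-quiddité, la propriété de $\oplus$ assure que le premier facteur, de taille $n-1 \geq 4$, est lui aussi une $\lambda$-quiddité ; on obtient donc une décomposition en deux facteurs de taille $\geq 3$, ce qui atteste la réductibilité.

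Il ne reste plus qu'à traiter les tailles $n \leq 4$ grâce à la Proposition \ref{31}. Les tailles $1$ et $2$ ne donnent que $(0,0)$, jamais irréductible. La taille $3$ donne exactement $(1,1,1)$ et $(-1,-1,-1)$. Pour la taille $4$, les solutions $(a,b,a,b)$ avec $ab=2$ sont toutes réductibles : on vérifie $(2,1,2,1)=(1,1,1)\oplus(1,1,1)$ et $(-2,-1,-2,-1)=(-1,-1,-1)\oplus(-1,-1,-1)$, et l'on conclut par équivalence circulaire. Les solutions $(-a,b,a,-b)$ avec $ab=0$ sont précisément les $(0,m,0,-m)$ et $(m,0,-m,0)$, et la caractérisation des réductibles de taille $4$ obtenue au premier paragraphe montre qu'elles sont irréductibles si et seulement si $m \neq \pm 1$. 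En réunissant tous ces cas, on obtient exactement la liste de l'énoncé.

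Le point le plus délicat me semble être l'étape de réduction en une composante égale à $\pm 1$ : il faut d'une part garantir qu'une telle composante existe (ce que procure le Théorème \ref{33} une fois écarté, via la Proposition \ref{32} i), le cas d'une composante nulle), et d'autre part exhiber la bonne décomposition $\oplus$ puis invoquer la stabilité de $\oplus$ pour conclure à la réductibilité sans recalculer $M_n$ directement. La vérification soigneuse des indices dans l'identité ci-dessus, ainsi que le contrôle de la taille $n-1$ du facteur restant, sont les seuls points demandant de l'attention.
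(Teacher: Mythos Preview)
The paper does not contain a proof of this statement: Theorem~\ref{34} is merely quoted from \cite{CH} (Théorème~6.2) as a known result, so there is no in-paper proof to compare against. Your argument is self-contained and correct; in particular, your use of Theorem~\ref{33} to locate an entry in $\{-1,0,1\}$, followed by Proposition~\ref{32}~i) for the zero case and the explicit decomposition $(a_1,\ldots,a_n)=(a_1\mp 1,a_2,\ldots,a_{n-2},a_{n-1}\mp 1)\oplus(\pm 1,\pm 1,\pm 1)$ for the $\pm 1$ case, is exactly the mechanism one expects and matches the strategy of the original Cuntz--Holm proof. Your treatment of sizes $3$ and $4$ via Proposition~\ref{31}, including the explicit enumeration of reducible size-$4$ solutions as sums of two triples, is also complete. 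Nothing substantive is missing.
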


\begin{theorem}[\cite{M2}, Théorème 2.7]
\label{35}

Soit $\alpha$ un nombre complexe transcendant. L'ensemble des $\lambda$-quiddités irréductibles sur l'anneau $\bZ[\alpha]$ est : \[\{(1,1,1), (-1,-1,-1), (0,P(\alpha),0,-P(\alpha)), (P(\alpha),0,-P(\alpha),0); P \in \bZ[X]-\{\pm 1\} \}.\]

\end{theorem}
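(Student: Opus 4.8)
The plan is to work throughout over the ring $\bZ[\alpha]$, using that $\alpha$ is transcendental so that every element is a unique integer polynomial in $\alpha$ and carries a well-defined degree. The key enabling observation is that being a $\lambda$-quiddité is a purely polynomial condition: if $a_{i}=P_{i}(\alpha)$ with $P_{i}\in\bZ[X]$, then $M_{n}(P_{1}(\alpha),\ldots,P_{n}(\alpha))=\pm Id$ forces, by transcendence of $\alpha$, the same identity $M_{n}(P_{1}(X),\ldots,P_{n}(X))=\pm Id$ to hold identically in $X$. Evaluating at an arbitrary real number $t$ then shows that $(P_{1}(t),\ldots,P_{n}(t))$ is a $\lambda$-quiddité in $\bR^{n}$ for \emph{every} $t\in\bR$. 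This specialization principle is what lets me import the archimedean estimate of Theorem~\ref{33} into an otherwise algebraic setting.

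First I would settle the small sizes directly from Proposition~\ref{31}. Sizes $1$ and $2$ yield nothing irreducible; size $3$ gives only $(1,1,1)$ and $(-1,-1,-1)$, which are irreducible since a size-$3$ tuple cannot be a $\oplus$-sum of two blocks of size $\geq 3$ (such a sum has size $\geq 4$). For size $4$, Proposition~\ref{31} leaves the families $(0,c,0,-c)$, $(c,0,-c,0)$ with $c\in\bZ[\alpha]$, and $(a,b,a,b)$ with $ab=2$. In the domain $\bZ[\alpha]$ the equation $ab=2$ forces $\{a,b\}\subset\{\pm1,\pm2\}$, so such tuples are equivalent to $(2,1,2,1)$ or $(-2,-1,-2,-1)$; computing the four products $(\pm1,\pm1,\pm1)\oplus(\pm1,\pm1,\pm1)$ shows that the reducible size-$4$ $\lambda$-quiddités are exactly those equivalent to $(2,1,2,1)$, $(0,1,0,-1)$ or $(-2,-1,-2,-1)$. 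Hence $(a,b,a,b)$ with $ab=2$ is reducible, while $(0,c,0,-c)$ is irreducible precisely when $c\neq\pm1$, i.e. (by transcendence) when $P\neq\pm1$, matching the asserted list.

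The heart of the argument is to show there is no irreducible $\lambda$-quiddité of size $n\geq 5$. I would first establish a reduction lemma: a $\lambda$-quiddité on $\bZ[\alpha]$ of size $\geq 5$ with an entry in $\{-1,0,1\}$ is reducible. A $0$ entry is handled by Proposition~\ref{32}(i). For an entry equal to $1$, after a cyclic rotation placing it last, the identity $(c_{1}-1,c_{2},\ldots,c_{n-2},c_{n-1}-1)\oplus(1,1,1)=(c_{1},\ldots,c_{n-1},1)$ together with the remarkable property of $\oplus$ exhibits the tuple as a sum of two blocks of sizes $n-1\geq 4$ and $3$; an entry $-1$ is treated identically with $(-1,-1,-1)$. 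Consequently an irreducible $\lambda$-quiddité of size $\geq 5$ has every entry in $\bZ[\alpha]\setminus\{-1,0,1\}$, so each $P_{i}$ is either non-constant or a constant of absolute value $\geq 2$.

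I then close the argument with the specialization principle. For such a tuple, $(P_{1}(t),\ldots,P_{n}(t))$ is a $\lambda$-quiddité in $\bR^{n}$ for every real $t$; as $t\to+\infty$ each non-constant $P_{i}(t)$ tends to infinity in modulus while each constant entry already has modulus $\geq 2$, so for $t$ large enough \emph{all} components have modulus $\geq 2$, contradicting Theorem~\ref{33}, which guarantees at least two components of modulus $<2$. This excludes size $\geq 5$ and, with the small-size analysis, gives the classification. I expect the main obstacle to be arranging the reduction lemma so that the entries in $\{-1,0,1\}$ are genuinely removed before specializing, since these are exactly the components that otherwise satisfy the Cuntz--Holm bound for free; once that is secured, the limit $t\to+\infty$ against Theorem~\ref{33} is the decisive and clean step.
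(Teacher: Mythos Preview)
The paper does not actually prove this theorem: it is stated in Section~\ref{class} as a known classification result and attributed to \cite{M2}, Th\'eor\`eme~2.7, with no argument given here. So there is no in-paper proof to compare against.

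That said, your argument is correct and self-contained. The specialization principle you isolate---that transcendence of $\alpha$ promotes $M_{n}(P_{1}(\alpha),\ldots,P_{n}(\alpha))=\epsilon Id$ to a polynomial identity in $\bZ[X]$, which can then be evaluated at any real $t$---is exactly the mechanism the present paper exploits elsewhere (it underlies Proposition~\ref{37} and, in a conjugate-root guise, Proposition~\ref{41} and Theorem~\ref{25}). Your reduction lemma for entries in $\{-1,0,1\}$ is standard and correctly justified via $(c_{1}-1,\ldots,c_{n-1}-1)\oplus(1,1,1)$ and Proposition~\ref{32}(i). The size-$4$ bookkeeping is right: in the UFD $\bZ[\alpha]\cong\bZ[X]$ the only solutions of $ab=2$ are $\{a,b\}=\{1,2\}$ or $\{-1,-2\}$, and these are reducible, while $(0,c,0,-c)$ is reducible exactly when $c=\pm 1$. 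Finally, once every component is non-constant or a constant of modulus $\geq 2$, letting $t\to+\infty$ produces a $\lambda$-quiddit\'e in $\bR^{n}$ with all components of modulus $\geq 2$, contradicting Theorem~\ref{33}; this cleanly rules out sizes $\geq 5$. Your write-up would benefit from one explicit sentence noting that the sign $\epsilon$ is fixed (so the same $\epsilon$ works after specialization), but this is cosmetic.
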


En plus du résultat sur $\mathbb{Z}=<1>$, on possède plusieurs résultats concernant les cas des sous-groupes monogènes. Le premier d'entre-eux est une conséquence simple du théorème \ref{33}.

\begin{proposition}[\cite{M6}, proposition 3.10]
\label{36}

Soit $w$ un nombre complexe vérifiant $\left|w\right| \geq 2$. L'ensemble des $\lambda$-quiddités irréductibles sur $<w>$ est : \[\{(0,kw,0,-kw), (kw,0,-kw,0); k \in \bZ\}.\]

\end{proposition}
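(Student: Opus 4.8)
The plan is to begin by recording the structural fact that $1 \notin <w>$: if $1 = kw$ with $k \in \bZ$, then $1 = |k|\,|w| \geq 2|k|$ forces $k = 0$, which is absurd. This is precisely what makes the preliminary results applicable here. Next I would verify that the tuples in the stated list are indeed irreducible $\lambda$-quiddités on $<w>$: each of $(0,kw,0,-kw)$ and $(kw,0,-kw,0)$ has the shape $(-a,b,a,-b)$ with $ab = 0$, hence is a solution of \eqref{a} by Proposition \ref{31}; and since $1 \notin <w>$, Proposition \ref{32}(ii) guarantees that every $\lambda$-quiddité of size $4$ on $<w>$ is irreducible, in particular these ones.

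For the converse, let $(a_1,\ldots,a_n)$ be an irreducible $\lambda$-quiddité on $<w>$ (so $n \geq 3$) and write $a_i = k_i w$ with $k_i \in \bZ$. The key elementary remark is that, since $|w| \geq 2$, a component with $|a_i| < 2$ satisfies $|k_i| < 1$, hence $a_i = 0$. If $n = 3$, the only solutions of \eqref{a} are $(1,1,1)$ and $(-1,-1,-1)$ by Proposition \ref{31}, and these do not belong to $<w>^{3}$ because $1 \notin <w>$, so this case is vacuous. If $n \geq 5$, Theorem \ref{33} provides an index $i$ with $|a_i| < 2$, whence $a_i = 0$, and then Proposition \ref{32}(i) forces the $\lambda$-quiddité to be reducible, a contradiction; so this case is vacuous as well. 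It remains to treat $n = 4$: by Proposition \ref{31} the solutions are the $(-a,b,a,-b)$ with $ab = 0$ and the $(a,b,a,b)$ with $ab = 2$. Writing $a = kw$, $b = lw$ in the second family with $kl \neq 0$ gives $|w|^{2} = 2/|kl| \leq 2$, contradicting $|w| \geq 2$; so only the first family survives, and there the condition $ab = 0$ reads $kl = 0$, which produces exactly the tuples $(kw,0,-kw,0)$ and $(0,lw,0,-lw)$. Assembling the three cases yields the announced classification.

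The whole argument amounts to organizing Proposition \ref{31}, Proposition \ref{32} and Theorem \ref{33}, so no genuine difficulty is anticipated. The only point deserving a little attention is the case $n \geq 5$, where one must use that Theorem \ref{33} yields a component of modulus \emph{strictly} less than $2$ — so that, on $<w>$, it is forced to vanish — rather than merely a component of modulus at most $2$.
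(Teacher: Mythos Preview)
Your argument is correct. The paper itself does not prove this proposition: it is quoted from \cite{M6} (as proposition 3.10 there) and stated without proof in the present text, so there is no in-paper argument to compare against. Your proof is the natural one suggested by the surrounding preliminaries --- combine Proposition~\ref{31}, Proposition~\ref{32} and Theorem~\ref{33} exactly as you do --- and each step is sound, including the exclusion of the $(a,b,a,b)$ family with $ab=2$ via $|w|^{2}=2/|kl|\leq 2<4\leq |w|^{2}$.
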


Notons que cette proposition combiné au théorème \ref{34} permet d'effectuer la classification des $\lambda$-quiddités irréductibles sur les sous-groupes engendrés par un entier (voir \cite{M6} corollaire 3.11).
\\
\\ \indent En ce qui concerne les cas des sous-groupes monogènes engendrés par un nombre complexe transcendant, on dispose d'une classification complète.

\begin{proposition}[\cite{M6}, proposition 3.12]
\label{37}

Soit $\alpha$ un nombre complexe transcendant. L'ensemble des $\lambda$-quiddités irréductibles sur $<\alpha>$ est : \[\{(0,k\alpha,0,-k\alpha), (k\alpha,0,-k\alpha,0); k \in \bZ\}.\]

\end{proposition}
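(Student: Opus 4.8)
Le plan est de procéder par double inclusion. Pour l'inclusion directe, je vérifierais que les quadruplets $(0,k\alpha,0,-k\alpha)$ et $(k\alpha,0,-k\alpha,0)$ sont bien des $\lambda$-quiddités sur $<\alpha>$ : ce sont des $4$-uplets du type $(-a,b,a,-b)$ avec $ab=0$, donc des solutions de \eqref{a} d'après la proposition \ref{31}, et leurs composantes appartiennent à $<\alpha>$. Comme $\alpha$ est transcendant, on a de plus $1 \notin <\alpha>$ (sinon $\alpha = \frac{1}{k}$ serait rationnel), si bien que la proposition \ref{32} ii) assure que toute $\lambda$-quiddité de taille $4$ sur $<\alpha>$ est irréductible ; c'est en particulier le cas des quadruplets ci-dessus.

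Le cœur de la preuve est le lemme suivant : toute $\lambda$-quiddité sur $<\alpha>$ de taille $n \geq 3$ possède une composante nulle. Pour l'établir, je partirais d'une $\lambda$-quiddité $(k_1\alpha,\ldots,k_n\alpha)$ sur $<\alpha>$, avec $(k_1,\ldots,k_n) \in \bZ^n$, en supposant par l'absurde que $k_i \neq 0$ pour tout $i$. Les coefficients de $M_n(k_1 X,\ldots,k_n X)$ sont des polynômes en $X$ à coefficients entiers, et l'hypothèse $M_n(k_1\alpha,\ldots,k_n\alpha) = \varepsilon Id$, pour un certain $\varepsilon \in \{-1,1\}$, signifie que ces polynômes prennent en $\alpha$ des valeurs appartenant à $\{-1,0,1\}$. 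Or, $\alpha$ étant transcendant, un polynôme à coefficients rationnels prenant en $\alpha$ une valeur rationnelle est constant : on obtient ainsi l'identité polynomiale $M_n(k_1 X,\ldots,k_n X) = \varepsilon Id$ pour tout $X \in \bC$. En évaluant celle-ci en un réel $r$ suffisamment grand, disons $r \geq \frac{2}{\min_i \left|k_i\right|}$, on obtient une $\lambda$-quiddité $(k_1 r,\ldots,k_n r)$ dont toutes les composantes sont de module $\geq 2$, ce qui, $n$ étant $\geq 3$, contredit le théorème \ref{33}. Le lemme en découle.

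Il ne reste plus qu'à discuter selon la taille $n$ d'une $\lambda$-quiddité irréductible sur $<\alpha>$. La proposition \ref{31} traite les tailles $\leq 2$ : il n'y a pas de solution de taille $1$, et $(0,0)$ est la seule de taille $2$, laquelle n'est jamais irréductible. Pour $n \geq 3$, le lemme fournit une composante nulle. Si $n = 3$, il n'existe alors aucune $\lambda$-quiddité sur $<\alpha>$, les seules de taille $3$ sur $\bC$ étant $(1,1,1)$ et $(-1,-1,-1)$, qui sont dépourvues de composante nulle (et dont les composantes ne sont de toute façon pas dans $<\alpha>$). Si $n = 4$, la proposition \ref{31} combinée à l'existence d'une composante nulle impose un $4$-uplet du type $(-a,b,a,-b)$ avec $ab = 0$ et $a,b \in <\alpha>$ — le type $(a,b,a,b)$ avec $ab = 2$ étant écarté faute de composante nulle, ou encore parce qu'il forcerait $\alpha^2 \in \bQ$ — et l'on retrouve exactement l'un des quadruplets annoncés. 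Enfin, pour $n \geq 5$, la présence d'une composante nulle jointe à la proposition \ref{32} i) entraîne la réductibilité. On obtient ainsi la liste voulue.

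La principale difficulté me paraît résider dans l'argument de transcendance du lemme. Une application directe du théorème \ref{33} à $<\alpha>$ ne suffit pas : si $\left|\alpha\right|$ est petit, il existe une infinité d'entiers $k$ avec $\left|k\alpha\right| < 2$, et l'on ne peut donc majorer de cette façon la taille des $\lambda$-quiddités. C'est la transcendance de $\alpha$ qui autorise, via l'identité polynomiale, à remplacer $\alpha$ par un réel arbitrairement grand, et ainsi à faire apparaître la contradiction recherchée.
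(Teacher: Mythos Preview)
Your argument is correct. The key lemma --- that any $\lambda$-quiddité on $<\alpha>$ of size $\geq 3$ must contain a zero --- is established by the right mechanism: the continuants $K_j$ are polynomials in $X$ with integer coefficients, the equation $M_n(k_1\alpha,\ldots,k_n\alpha)=\varepsilon Id$ forces each such polynomial minus a rational constant to vanish at the transcendental $\alpha$, hence to vanish identically, and then specialising $X$ to a real number $r\geq 2$ (which suffices since $\left|k_i\right|\geq 1$) produces a $\lambda$-quiddité whose entries all have modulus $\geq 2$, contradicting Theorem~\ref{33}. The remaining case analysis via Propositions~\ref{31} and~\ref{32} is routine and carried out correctly.

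Note, however, that the paper does not actually supply a proof of this proposition: it is quoted from \cite{M6} as a known classification result, so there is no ``paper's own proof'' to compare against here. That said, your approach is very much in the spirit of the techniques the present paper does develop. Proposition~\ref{41} rests on the same observation that the entries of $M_n(k_1X,\ldots,k_nX)$ are integer polynomials in $X$: in the algebraic case one transfers the $\lambda$-quiddité relation from $\alpha$ to a conjugate $\beta$ (possibly of large modulus), whereas in your transcendental case the polynomial identity holds for \emph{every} complex $X$, which lets you pick $X$ large directly. Your proof is thus the natural degenerate (and simpler) version of the conjugate argument, and it is what one would expect the original proof in \cite{M6} to look like.
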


\noindent Dans le cas des nombres algébriques, on a le résultat suivant :

\begin{theorem}[\cite{M6}, Théorème 2.5]
\label{38}

Soit $k \in \bN$.
\\
\\i) Si $k=0$. $(0,0,0,0)$ est la seule $\lambda$-quiddité irréductible sur $<\sqrt{k}>$.
\\
\noindent ii) Si $k=2$. L'ensemble des $\lambda$-quiddités irréductibles sur $<\sqrt{2}>$ est : \[\{(\sqrt{2},\sqrt{2},\sqrt{2},\sqrt{2}), (-\sqrt{2},-\sqrt{2},-\sqrt{2},-\sqrt{2}), (0,a\sqrt{2},0,-a\sqrt{2}), (a\sqrt{2},0,-a\sqrt{2},0); a \in \bZ\}.\]

\noindent iii) Si $k=3$. L'ensemble des $\lambda$-quiddités irréductibles sur $<\sqrt{3}>$ est : \[\{\pm (\sqrt{3},\sqrt{3},\sqrt{3},\sqrt{3},\sqrt{3},\sqrt{3}), (0,a\sqrt{3},0,-a\sqrt{3}), (a\sqrt{3},0,-a\sqrt{3},0); a \in \bZ\}.\]

\noindent iv) Si $k \geq 4$. L'ensemble des $\lambda$-quiddités irréductibles sur $<\sqrt{k}>$ est : \[\{(0,a\sqrt{k},0,-a\sqrt{k}), (a\sqrt{k},0,-a\sqrt{k},0); a \in \bZ\}.\]

\end{theorem}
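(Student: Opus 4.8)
The plan is to split the analysis according to the position of $\sqrt{k}$ relative to the Cuntz--Holm threshold $2$ of Theorem \ref{33}, treating separately the degenerate values $k=0,1$, the large range $k\geq 4$, and the two delicate cases $k=2,3$ where $1<\sqrt{k}<2$. The easy cases I would dispatch first. For $k=0$ we have $<\sqrt{0}>=\{0\}$, so every component is $0$; any solution of size $\geq 5$ is reducible by Proposition \ref{32}(i), size $3$ is excluded since $(\pm1,\pm1,\pm1)\notin\{0\}^{3}$, size $2$ gives only the non-irreducible $(0,0)$, and Proposition \ref{31} leaves exactly $(0,0,0,0)$, which is irreducible by Proposition \ref{32}(ii) because $1\notin<0>$. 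For $k=1$ we have $<\sqrt{1}>=<1>=\bZ$ and the statement is Theorem \ref{34} verbatim. For $k\geq 4$ we have $|\sqrt{k}|\geq 2$, so Proposition \ref{36} applies directly and yields exactly the trivial family, disposing of case (v).

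The substance lies in $k\in\{2,3\}$. I would first record that all solutions have even size: writing each component as $n_{i}\sqrt{k}$ and expanding the continuant $K_{n}$ by Euler's algorithm, every monomial is an integer multiple of $(\sqrt{k})^{n-2j}$, so $K_{n}\in\bZ$ when $n$ is even and $K_{n}\in\sqrt{k}\,\bZ$ when $n$ is odd; since a $\lambda$-quiddité requires $K_{n}=\pm1$ and $\sqrt{k}$ is irrational, $n$ must be even. Next I would separate according to whether $0$ occurs. If $0$ occurs and the size is $\geq 5$, Proposition \ref{32}(i) gives reducibility, so an irreducible solution containing $0$ has size $\leq 4$; the even sizes $2$ and $4$ are handled by Proposition \ref{31}, the size-$2$ case being the excluded $(0,0)$ and the size-$4$ case producing only the trivial family $(0,a\sqrt{k},0,-a\sqrt{k})$ and $(a\sqrt{k},0,-a\sqrt{k},0)$, irreducible by Proposition \ref{32}(ii).

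It remains to classify the \emph{zero-free} irreducible $\lambda$-quiddités, and this is where the real work lies. In size $4$, Proposition \ref{31} forces the shape $(a,b,a,b)$ with $ab=2$; writing $a=p\sqrt{k}$, $b=q\sqrt{k}$ gives $kpq=2$, which for $k=2$ has only $(p,q)=\pm(1,1)$, yielding $\pm(\sqrt{2},\sqrt{2},\sqrt{2},\sqrt{2})$, and for $k=3$ has no solution at all, consistent with the special tuple first appearing at size $6$. For the remaining even sizes I would argue that every zero-free solution of size $\geq 6$ is reducible unless it is $\pm(\sqrt{3})^{6}$. By Theorem \ref{33} at least two components have modulus $<2$, hence equal $\pm\sqrt{k}$ (the only nonzero elements of $<\sqrt{k}>$ below $2$), while every other component has modulus $\geq 2\sqrt{k}>2$. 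The key device is that the critical tuple $(\sqrt{k},\ldots,\sqrt{k})$ of length $4$ (for $k=2$) or $6$ (for $k=3$) is itself a $\lambda$-quiddité, since $B(\sqrt{k})=\bigl(\begin{smallmatrix}\sqrt{k}&-1\\1&0\end{smallmatrix}\bigr)$ is an elliptic rotation of angle $\arccos(\sqrt{k}/2)$ equal to $\pi/4$ or $\pi/6$ and hence satisfies $B(\sqrt{k})^{4}=-Id$ or $B(\sqrt{k})^{6}=-Id$. Whenever a cyclic representative exhibits a run of consecutive entries matching the interior of this tuple, the decomposition formula for $\oplus$ lets me split off the critical factor and conclude reducibility via Definition \ref{24}.

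The main obstacle, then, is to prove that in any zero-free irreducible solution of size $\geq 6$ the small components $\pm\sqrt{k}$ are forced to occur in a long enough adjacent run to trigger this splitting — a statement about how the Cuntz--Holm small entries must cluster. I expect to control this through the rotation-number interpretation of the matrices $B(\sqrt{k})$: a component $n_{i}\sqrt{k}$ with $|n_{i}|\geq 2$ is hyperbolic and contributes no rotation, each $\pm\sqrt{k}$ contributes a fixed angle ($\pi/4$ or $3\pi/4$, resp. $\pi/6$ or $5\pi/6$), and the condition $M_{n}=\pm Id$ forces the accumulated angle to be a multiple of $\pi$; balancing this against the parity and sign constraints should both produce the solution $\pm(\sqrt{3})^{6}$ and rule out every other zero-free configuration. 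Making this angle bookkeeping rigorous in $\widetilde{SL_{2}(\bR)}$ — in particular handling the two signs in $\pm Id$ and the negative small entries — is the delicate point, and is precisely what separates the transparent range $k\geq 4$ from the exceptional values $k=2,3$.
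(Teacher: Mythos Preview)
This theorem is not proved in the present paper: it is quoted from \cite{M6} (Théorème~2.5 there) and stated without argument in Section~\ref{class}, so there is no in-paper proof to compare against. That said, the opening line of Section~\ref{preuve27} tells you that the method of \cite{M6} is close to the one used for Theorem~\ref{27}: the continuant identity $K_{2n}(k_{1}w,\ldots,k_{2n}w)=K_{2n}(k_{1},k_{2}w^{2},\ldots,k_{2n-1},k_{2n}w^{2})$ turns a $\lambda$-quiddité over $\langle\sqrt{k}\rangle$ into one over $\bZ$ (since $w^{2}=k\in\bZ$) whose even-positioned entries are nonzero multiples of $k$, after which one argues from Theorem~\ref{33} and the reduction identities $M_{3}(a,\pm1,b)=\pm M_{2}(a\mp1,b\mp1)$. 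Your handling of $k=0,1$, of $k\geq4$, of the parity of $n$, and of the size-$4$ analysis for $k=2,3$ is correct and is compatible with that framework.

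The genuine gap is exactly where you flag it, but the proposed fix is unsound as written, not merely delicate. Your claim that a hyperbolic factor $B(n_{i}\sqrt{k})$ with $|n_{i}|\geq2$ ``contributes no rotation'' while each elliptic factor contributes a fixed angle that one then adds up is false: the translation number on $\widetilde{SL_{2}(\bR)}$ is only a quasimorphism, not a homomorphism, so the rotation of a product is \emph{not} the sum of the rotations of the factors, and hyperbolic elements do move the lifted angle. There is therefore no additive angle ledger that forces the $\pm\sqrt{k}$ entries to cluster into a run long enough to split off the critical tuple via $\oplus$. If you want an argument that actually closes, abandon the rotation picture and follow the continuant transformation above: for $k=3$ the transformed integer tuple has every even-positioned entry of modulus $\geq3$, and for $k=2$ of modulus $\geq2$, so the Cuntz--Holm small entries are forced to the odd positions with value $\pm1$; reducing those and tracking the resulting neighbours combinatorially, exactly as in the proof of Theorem~\ref{27}, is the route that disposes of the zero-free case for $k\in\{2,3\}$.
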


\noindent Pour d'autres résultats de classification, on peut consulter \cite{C,M6}.

\subsection{Solutions de taille paire}

L'objectif de cette section est de fournir un certain nombre d'éléments permettant de montrer que toutes les $\lambda$-quiddités sur un sous-groupe $G$ sont de taille paire. On commence par le résultat suivant :

\begin{proposition}
\label{39}

Soit $G$ un sous-groupe de $\bC$. Si toutes les $\lambda$-quiddités de taille paire sur $G$ ont une de leurs composantes égale à 0 alors toutes les $\lambda$-quiddités sur $G$ sont de taille paire et l'ensemble des $\lambda$-quiddités irréductibles sur $G$ est : \[\{(0,g,0,-g), (g,0,-g,0); g \in G\}.\]

\end{proposition}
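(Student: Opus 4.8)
The plan is to draw three things out of the hypothesis, in order: that $1\notin G$; that every $\lambda$-quiddity on $G$ has even size; and then the list of irreducibles. For the first point, if $1\in G$ then by Proposition~\ref{31} the triple $(1,1,1)$ is a $\lambda$-quiddity on $G$, hence so is $(1,1,1)\oplus(1,1,1)=(2,1,2,1)$ (the $\oplus$-sum of two $\lambda$-quiddités is a $\lambda$-quiddité, and $(2,1,2,1)$ is indeed one by Proposition~\ref{31} since $2\cdot 1=2$). This would be a $\lambda$-quiddité of even size $4$ on $G$ with all components nonzero, contradicting the hypothesis; so $1\notin G$.

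To show that every $\lambda$-quiddité on $G$ has even size, I would argue by minimal counterexample: assume some $\lambda$-quiddité on $G$ has odd size, and let $Q=(a_1,\ldots,a_n)$ be one of minimal odd size. By Proposition~\ref{31} we have $n\neq 1$, and $n\neq 3$ since a size-$3$ $\lambda$-quiddité is $(\pm1,\pm1,\pm1)$, which would put $1$ in $G$. So $n\geq 5$. If $Q$ had a component equal to $0$, then by Proposition~\ref{32}(i) it would be reducible, say $Q\sim(c_1,\ldots,c_m)\oplus(b_1,\ldots,b_l)$ with $m,l\geq 3$ and $m+l-2=n$; both $(c_i)$ and $(b_j)$ are then $\lambda$-quiddités on $G$ (the second by the definition of reducibility, the first since $\oplus$-summing with the $\lambda$-quiddité $(b_j)$ characterises $\lambda$-quiddités among the $(c_i)$ and $Q$ is a $\lambda$-quiddité), and as $m+l=n+2$ is odd, whichever of $m,l$ is odd produces a $\lambda$-quiddité of odd size in $\{3,\ldots,n-1\}$, against minimality. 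Hence $Q$ has no component equal to $0$. Now for each $j$ the cyclic shift $\sigma_j=(a_j,a_{j+1},\ldots,a_{j+n-1})$ (subscripts read modulo $n$) is again a $\lambda$-quiddité, so $\sigma_j\oplus\sigma_j$ is a $\lambda$-quiddité of even size $2n-2\geq 8$; its components are $a_{j-1}+a_j$ (appearing twice) and the entries $a_i$ with $i\notin\{j-1,j\}$, and since these last ones are all nonzero the hypothesis forces $a_{j-1}+a_j=0$. As this holds for every $j$, going once around the cycle gives $a_n=(-1)^n a_n$; since $n$ is odd this says $a_n=-a_n$, so $a_n=0$, a contradiction. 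Therefore no $\lambda$-quiddité on $G$ has odd size.

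For the classification: every $(0,g,0,-g)$ and $(g,0,-g,0)$ with $g\in G$ is a $\lambda$-quiddité on $G$ by Proposition~\ref{31} (it has the shape $(-a,b,a,-b)$ with $ab=0$) and is irreducible by Proposition~\ref{32}(ii), since $1\notin G$. Conversely let $Q$ be an irreducible $\lambda$-quiddité on $G$; by Proposition~\ref{31} and the convention that $(0,0)$ is not irreducible its size is $\geq 3$, and it is even by the previous step, hence $\geq 4$; if it were $\geq 6$ it would contain $0$ by the hypothesis and so be reducible by Proposition~\ref{32}(i), hence its size is exactly $4$. Being of size $4$ and containing $0$, it must be of the form $(-a,b,a,-b)$ with $ab=0$ (the other size-$4$ family $(a,b,a,b)$ with $ab=2$ has no zero entry), i.e.\ $(0,b,0,-b)$ or $(-a,0,a,0)=(g,0,-g,0)$ with $g=-a$ — in both cases an element of the displayed set, which finishes the description.

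I expect the crux to be the elimination of odd-size quiddités of size $\geq 5$ inside the second step. The idea that makes it work is to feed \emph{every} cyclic rotation of $Q$ into the self-sum $\sigma\oplus\sigma$ and use the hypothesis on each one, which pins down the relation $a_{j-1}+a_j=0$ all the way around the cycle; oddness of $n$ then forces a zero entry and collapses the case. I would also take care that Proposition~\ref{32}(i) is invoked only once $n\geq 5$, and that the odd factor produced in the reducible case genuinely has size between $3$ and $n-1$, so that minimality is really contradicted.
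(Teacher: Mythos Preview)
Your argument is correct, but the paper takes a shorter route at the key step. Where you run a minimal-counterexample argument and, for a putative zero-free odd $Q$, compute $\sigma_j\oplus\sigma_j$ for \emph{every} cyclic shift to force $a_{j-1}+a_j=0$ all around, the paper simply observes that if $(a_1,\ldots,a_n)$ is a $\lambda$-quiddité of odd size then $M_{2n}(a_1,\ldots,a_n,a_1,\ldots,a_n)=M_n(a_1,\ldots,a_n)^2=Id$, so the concatenation $(a_1,\ldots,a_n,a_1,\ldots,a_n)$ is a $\lambda$-quiddité of even size $2n$; by hypothesis it has a zero entry, hence so does $(a_1,\ldots,a_n)$. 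In one line this yields that \emph{every} $\lambda$-quiddité on $G$ contains $0$, from which $1\notin G$, the irreducible list, and the parity statement all follow as in your third paragraph. Your approach has the merit that the parity of the size is established \emph{before} the irreducible classification (so you never invoke, even implicitly, a factorisation into irreducibles), and your derivation of $1\notin G$ via $(2,1,2,1)$ is a pleasant variant; but the paper's squaring trick replaces your rotation-and-cycle argument by a single observation and is worth knowing.
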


\begin{proof}

Soient $n \in \mathbb{N}$, $n$ impair et $(a_{1},\ldots,a_{n}) \in G^{n}$ une $\lambda$-quiddité. 
\[M_{2n}(a_{1},\ldots,a_{n},a_{1},\ldots,a_{n})=M_{n}(a_{1},\ldots,a_{n})^{2}=Id.\]

\noindent Ainsi, $(a_{1},\ldots,a_{n},a_{1},\ldots,a_{n})$ est une solution de \eqref{a}. Par hypothèse, elle contient 0 et donc $(a_{1},\ldots,a_{n})$ contient 0. On en déduit que toutes les $\lambda$-quiddités sur $G$ contiennent 0.
\\
\\ Par la proposition \ref{32}, toutes les solutions de \eqref{a} sur $G$ de taille supérieure à 5 sont réductibles. De plus, $1 \notin G$, sinon $(1,1,1)$ serait une solution sur $G$ ne contenant pas 0. Comme $G$ est un sous-groupe de $\bC$, $-1 \notin G$. Donc, les $\lambda$-quiddités sur $G$ de taille 4 sont irréductibles (proposition \ref{32}) et il n'en existe pas de taille 3 (proposition \ref{31}). Enfin, puisque toutes les solutions contiennent 0, il n'existe pas de solution de la forme $(a,b,a,b)$ avec $ab=2$.
\\
\\Ainsi, les $\lambda$-quiddités irréductibles sur $G$ sont celles données dans l'énoncé, ce qui implique nécessairement que toutes les solutions sur $G$ sont de taille paire.

\end{proof}

\begin{remarks}

{\rm
i) La réciproque est fausse. Par exemple, toutes les solutions de \eqref{a} sur $<\sqrt{2}>$ sont de taille paire mais elles ne contiennent pas toutes 0 (voir Théorème \ref{38}).
\\
\\ii) Il existe bien sûr des cas où des $\lambda$-quiddités de taille impaire existent, comme pour $G=\bZ$. On peut également montrer que de telles solutions existent pour $G=<\varphi>$ avec $\varphi:=\frac{1+\sqrt{5}}{2}$ le nombre d'or. En effet, on montre, par un simple calcul, que $(\varphi,\varphi,\varphi,\varphi,\varphi)$ et $(-\varphi,-\varphi,-\varphi,-\varphi,-\varphi)$ sont des $\lambda$-quiddités sur $<\varphi>$. Malheureusement, on ne possède pas de classification des solutions de \eqref{a} irréductibles sur ce sous-groupe. Toutefois, on peut émettre la conjecture suivante :
}

\begin{con}

L'ensemble des $\lambda$-quiddités irréductibles sur $<\varphi>$ est : \[\{(\varphi,\varphi,\varphi,\varphi,\varphi), (-\varphi,-\varphi,-\varphi,-\varphi,-\varphi), (0,k\varphi,0,-k\varphi), (k\varphi,0,-k\varphi,0); k \in \bZ\}.\]

\end{con}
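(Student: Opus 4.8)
The plan is to treat the sizes separately and to reduce the whole question to a word problem in a Hecke group, isolating the one combinatorial step that resists the methods already in the paper. First I would settle the small sizes exactly as in the proof of Proposition~\ref{39}: by Proposition~\ref{31} there is no solution of size $1$ or $3$ on $<\varphi>$ (since $\pm 1\notin<\varphi>$) and the only one of size $2$ is $(0,0)$, which is not irreducible; by Proposition~\ref{32}~(ii) every size-$4$ solution is irreducible. Since $\varphi^{2}=\varphi+1$ is irrational, $pq\varphi^{2}=2$ is impossible for $(p,q)\in\bZ^{2}$, so the type $(a,b,a,b)$ with $ab=2$ of Proposition~\ref{31} does not occur on $<\varphi>$; hence the irreducible size-$4$ solutions are exactly $(0,l\varphi,0,-l\varphi)$ and $(l\varphi,0,-l\varphi,0)$, which are already in the list. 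Finally $\pm(\varphi,\varphi,\varphi,\varphi,\varphi)$ are $\lambda$-quiddités, and they are irreducible because a reduction of a size-$5$ solution would need a $\lambda$-quiddité factor of size $3$, which does not exist.

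It remains to show that the only irreducible solutions of size $\geq 5$ are $\pm(\varphi,\varphi,\varphi,\varphi,\varphi)$. By Proposition~\ref{32}~(i) such a solution omits $0$, so every component is $k_{i}\varphi$ with $k_{i}\neq 0$; since $\left|k\varphi\right|<2$ forces $k\in\{0,\pm1\}$, Theorem~\ref{33} gives at least two components equal to $\pm\varphi$. The decisive reformulation is that, writing $S=\left(\begin{smallmatrix}0&-1\\1&0\end{smallmatrix}\right)$ and $T=\left(\begin{smallmatrix}1&\varphi\\0&1\end{smallmatrix}\right)$, one has $\left(\begin{smallmatrix}k\varphi&-1\\1&0\end{smallmatrix}\right)=T^{k}S$, whence $M_{n}(k_{1}\varphi,\ldots,k_{n}\varphi)=T^{k_{n}}S\,T^{k_{n-1}}S\cdots T^{k_{1}}S$. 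Now $S$ and $T$ generate the Hecke triangle group $H_{5}$: in $\mathrm{PSL}_{2}(\bR)$ one has $S^{2}=1$ and $(ST)^{5}=1$ (because $\mathrm{tr}(ST)=\varphi=2\cos(\pi/5)$), so $H_{5}\cong C_{2}*C_{5}$. Thus $(k_{1}\varphi,\ldots,k_{n}\varphi)$ is a $\lambda$-quiddité if and only if the word $T^{k_{n}}S\cdots T^{k_{1}}S$ is trivial in $H_{5}$, and the classification becomes the purely combinatorial problem of describing the trivial words of this shape in $C_{2}*C_{5}$.

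I would then push this reformulation as follows. Setting $a=S$ and $b=ST$ (so $T=ab$ in $\mathrm{PSL}_{2}(\bR)$), each factor becomes $T^{k}S=(ab)^{k}a$, and the $\lambda$-quiddité condition is that the corresponding alternating word in $a,b$ reduces to the identity using only $a^{2}=b^{5}=1$. Using the normal form in the free product $C_{2}*C_{5}$, a zero-free word can collapse only through the relation $b^{5}=1$, which forces the local pattern coming from $(\varphi,\varphi,\varphi,\varphi,\varphi)$ (five consecutive $k_{i}=1$, realising $(TS)^{5}=1$) and its sign-reversed analogue; one then checks that any other cancellation either reintroduces a $0$, lowers the size below $5$, or exhibits an internal $\oplus$-decomposition, so that the only $\oplus$-indecomposable zero-free relations are $\pm(\varphi,\varphi,\varphi,\varphi,\varphi)$. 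Translating the operation $\oplus$ and Definition~\ref{24} into this language is the bookkeeping that makes \og indecomposable relation \fg{} coincide with \og irreducible $\lambda$-quiddité \fg.

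The hard part, and the reason the statement is only conjectured, is precisely this combinatorial enumeration of the trivial words $T^{k_{n}}S\cdots T^{k_{1}}S$ with all $k_{i}\neq 0$ in $C_{2}*C_{5}$: unlike the setting of Theorem~\ref{25}, the generator $\varphi$ has no conjugate of modulus $\geq 2$ (both $\varphi$ and $\bar\varphi=\tfrac{1-\sqrt5}{2}$ lie in the open disc of radius $2$), so one cannot force reducibility by passing to a large Galois conjugate, and the short relation $(ST)^{5}=1$ permits cancellations that the continuant estimate of Theorem~\ref{33} alone does not control. Completing the argument seems to require a genuine structural input about the relators of the $(2,5,\infty)$ triangle group and a faithful matching of its primitive relations with the $\oplus$-indecomposable $\lambda$-quiddités; this is the step I expect to be the main obstacle.
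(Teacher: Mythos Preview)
The paper does not prove this statement: it is stated as a \emph{conjecture}, introduced immediately after the remark that \og on ne poss\`ede pas de classification des solutions de \eqref{a} irr\'eductibles sur ce sous-groupe \fg. There is therefore no proof in the paper to compare your proposal against.

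Your proposal is, as you yourself acknowledge, not a proof either. The reformulation via the Hecke group $H_{5}\cong C_{2}*C_{5}$ (using $\varphi=2\cos(\pi/5)$) is correct and is the natural framework, but the crucial step---classifying the zero-free trivial words $T^{k_{n}}S\cdots T^{k_{1}}S$ up to the $\oplus$-decomposition---is exactly what you leave open, and this is indeed the whole difficulty. Your treatment of sizes $\leq 4$ is fine. Your size-$5$ irreducibility argument has a small gap: a reduction of a size-$5$ solution requires $(m,l)\in\{(3,4),(4,3)\}$, so you must also exclude $l=4$, not only $l=3$. This is easy---every size-$4$ $\lambda$-quiddit\'e on $<\varphi>$ is of type $(-a,b,a,-b)$ with $ab=0$, hence at least one of the entries $b_{2},b_{3}$ is $0$, and these appear unchanged in the $\oplus$-sum, contradicting the fact that $(\varphi,\varphi,\varphi,\varphi,\varphi)$ has no zero component---but it should be said.
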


\end{remarks}

On peut également montrer que les solutions de \eqref{a} sur un ensemble donné sont toutes de taille paire en s'intéressant directement à la forme des matrices $M_{n}(a_{1},\ldots,a_{n})$. On propose ici deux exemples. 

\begin{proposition}
\label{310}

i) Les $\lambda$-quiddités sur $<e^{\frac{i \pi}{4}}>$ sont de taille paire. 
\\ii) Les $\lambda$-quiddités sur $<\frac{1}{\sqrt{2}}>$ sont de taille paire. 

\end{proposition}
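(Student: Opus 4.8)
The strategy is to analyze the arithmetic nature (in particular, the $\bZ[\sqrt 2]$-content) of the entries of $M_n(a_1,\ldots,a_n)$ when all $a_i$ lie in the relevant subgroup, and to show that $M_n = \pm Id$ forces $n$ even by a parity obstruction. For part (i), write $\zeta = e^{i\pi/4} = \tfrac{1}{\sqrt2}(1+i) = \tfrac{\sqrt2}{2}(1+i)$, so that $<\zeta> = \{ k\zeta : k \in \bZ\}$. A generic entry of $M_n(a_1,\ldots,a_n)$ is a continuant $K_j$ in a sublist of the $a_i$'s, and by the Euler algorithm recalled above, $K_j(b_1,\ldots,b_j)$ is a sum of products of between $0$ and $j$ of the $b_i$'s, where the number of factors in each product has the same parity as $j$ (each removed consecutive pair changes the count by $2$). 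Hence if every $b_i \in <\zeta>$, then $K_j(b_1,\ldots,b_j)$ is a $\bZ$-linear combination of powers $\zeta^m$ with $m \equiv j \pmod 2$. Since $\zeta^2 = i$, a product of an even number of generators lands in $\bZ[i]$, while a product of an odd number lands in $\zeta\bZ[i]$. Therefore $K_n(a_1,\ldots,a_n) \in \bZ[i]$ when $n$ is even and $K_n(a_1,\ldots,a_n) \in \zeta\bZ[i]$ when $n$ is odd; the same for the off-diagonal and lower entries with the appropriate shifted length ($n-1$, $n-2$). If $n$ is odd, then $K_n(a_1,\ldots,a_n) \in \zeta\bZ[i]$; but $M_n = \pm Id$ requires $K_n(a_1,\ldots,a_n) = \pm 1 \in \bZ[i]$, and $\zeta\bZ[i] \cap \bZ[i] = \{0\}$ (as $\zeta = e^{i\pi/4} \notin \bQ(i)$, indeed $\bQ(\zeta)=\bQ(i,\sqrt2)$ is a degree-$2$ extension of $\bQ(i)$ with $\{1,\zeta\}$ a basis). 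This is a contradiction, so $n$ must be even.

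For part (ii), set $t = 1/\sqrt 2$, so $<t> = \{kt : k \in \bZ\}$ and $t^2 = 1/2$. Again by the Euler algorithm, $K_j(b_1,\ldots,b_j)$ with all $b_i \in <t>$ is a $\bZ$-linear combination of powers $t^m$ with $m \equiv j \pmod 2$; since $t^2 = \tfrac12$, an even-length product is a dyadic rational (an element of $\bZ[\tfrac12]$) while an odd-length product lies in $t\,\bZ[\tfrac12] = \tfrac{1}{\sqrt2}\bZ[\tfrac12]$, which contains no nonzero rational number because $\sqrt2$ is irrational. Thus for $n$ odd, $K_n(a_1,\ldots,a_n) \in \tfrac{1}{\sqrt2}\bZ[\tfrac12]$, and $M_n = \pm Id$ would force this to equal $\pm 1 \in \bQ$, again impossible since $\tfrac{1}{\sqrt2}\bZ[\tfrac12] \cap \bQ = \{0\}$. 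Hence $n$ is even.

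I expect the only genuinely delicate point to be the bookkeeping in the Euler-algorithm step, i.e. verifying carefully that every monomial appearing in $K_j(b_1,\ldots,b_j)$ is a product of exactly $j - 2p$ of the generators for some $p \geq 0$, so that all such monomials have length congruent to $j$ modulo $2$; once this parity invariant is in hand, both parts follow from the same ``odd length lives outside the rational (resp. $\bQ(i)$) span'' observation, using that $\sqrt2 \notin \bQ$ and $\sqrt2 \notin \bQ(i)$. One should also note that it suffices to track the single entry $K_n(a_1,\ldots,a_n)$ of $M_n$, since it already must equal $\pm1$; the other three entries are not needed for the contradiction, though they obey analogous parity statements and could be mentioned for completeness.
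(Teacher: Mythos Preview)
Your proposal is correct and follows essentially the same route as the paper: both arguments use the Euler algorithm to see that for odd $n$ every monomial in $K_n(a_1,\ldots,a_n)$ involves an odd number of generators, hence $K_n$ lies in $\zeta\,\bZ[i]$ (resp.\ $\tfrac{1}{\sqrt2}\,\bQ$), and then derive a contradiction with $K_n=\pm1$. The only cosmetic difference is the final step in part (i): the paper writes $K_n=e^{i\pi/4}(u+iv)$ with $u,v\in\bZ$ and obtains the contradiction via the modulus inequality $1=\sqrt{u^2+v^2}\geq\sqrt2$, whereas you invoke $\zeta\notin\bQ(i)$ directly; these are equivalent.
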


\begin{proof}

i) Soit $n$  un entier naturel impair. Supposons par l'absurde qu'il existe une $\lambda$-quiddité $(a_{1},\ldots,a_{n})$ sur $<e^{\frac{i \pi}{4}}>$, avec pour tout $i \in [\![1;n]\!]$ $a_{i}=k_{i}e^{\frac{i \pi}{4}}$ ($k_{i} \in \bZ$). Il existe $\epsilon \in \{-1,1\}$ tel que 
\[\epsilon Id = M_{n}(a_{1},\ldots,a_{n})=\begin{pmatrix}
    K_{n}(a_{1},\ldots,a_{n}) & -K_{n-1}(a_{2},\ldots,a_{n}) \\
    K_{n-1}(a_{1},\ldots,a_{n-1})  & -K_{n-2}(a_{2},\ldots,a_{n-1}) 
   \end{pmatrix}.\]
	
\noindent En particulier, on a $K_{n}(a_{1},\ldots,a_{n})=\epsilon$. Or, par l'algorithme d'Euler, $K_{n}(a_{1},\ldots,a_{n})$ est une somme dont chacun des termes est un produit d'un nombre impair de $a_{i}$ (et éventuellement de -1). On factorise cette somme par $e^{\frac{i \pi}{4}}$ et on regroupe dans chaque terme les $e^{\frac{i \pi}{4}}$ restants (qui sont en nombre pair). Chaque terme se réduit donc à un entier multiplié par $\pm 1$ ou par $\pm i$. Ainsi, $K_{n}(a_{1},\ldots,a_{n})$ est de la forme $e^{\frac{i \pi}{4}}(u+iv)$ avec $(u,v) \in \bZ^{2}$. 
\\
\\De plus, $e^{\frac{i \pi}{4}}(u+iv)=K_{n}(a_{1},\ldots,a_{n})=\epsilon$, donc $u \neq 0$ et $v \neq 0$. Ainsi, \[1=\left|\epsilon\right|=\left|e^{\frac{i \pi}{4}}(u+iv)\right|=\left|u+iv\right|=\sqrt{u^{2}+v^{2}} \geq \sqrt{2}.\]

\noindent Ceci est absurde. Donc, les $\lambda$-quiddités sur $<e^{\frac{i \pi}{4}}>$ sont de taille paire.
\\
\\ii) Soit $n$  un entier naturel impair. Supposons par l'absurde qu'il existe une $\lambda$-quiddité $(a_{1},\ldots,a_{n})$ sur $<\frac{1}{\sqrt{2}}>$, avec pour tout $i \in [\![1;n]\!]$ $a_{i}=\frac{k_{i}}{\sqrt{2}}$ ($k_{i} \in \bZ$). Il existe $\epsilon \in \{-1,1\}$ tel que 
\[\epsilon Id = M_{n}(a_{1},\ldots,a_{n})=\begin{pmatrix}
    K_{n}(a_{1},\ldots,a_{n}) & -K_{n-1}(a_{2},\ldots,a_{n}) \\
    K_{n-1}(a_{1},\ldots,a_{n-1})  & -K_{n-2}(a_{2},\ldots,a_{n-1}) 
   \end{pmatrix}.\]
	
\noindent	En particulier, on a $K_{n}(a_{1},\ldots,a_{n})=\epsilon$. Or, par l'algorithme d'Euler, $K_{n}(a_{1},\ldots,a_{n})$ est une somme dont chacun des termes est un produit d'un nombre impair de $a_{i}$ (et éventuellement de -1). On factorise cette somme par $\frac{1}{\sqrt{2}}$ et on regroupe dans chaque terme les $\frac{1}{\sqrt{2}}$ restants (qui sont en nombre pair). Chaque terme se réduit donc à un entier multiplié par une puissance de $\frac{1}{2}$. Ainsi, il existe un rationnel $x$ tel que $K_{n}(a_{1},\ldots,a_{n})=\frac{1}{\sqrt{2}}x$.
\\
\\Donc, $K_{n}(a_{1},\ldots,a_{n}) \in \bR-\bQ$. Or, $K_{n}(a_{1},\ldots,a_{n})=\epsilon \in \bQ$. Ceci est absurde. Ainsi, les $\lambda$-quiddités sur $<\frac{1}{\sqrt{2}}>$ sont de taille paire.

\end{proof}

Nous allons maintenant tenter d'obtenir dans les sections suivantes de nouveaux résultats de classification des $\lambda$-quiddités irréductibles sur des sous-groupes monogènes engendrés par des nombres algébriques.

\section{Utilisation des éléments conjugués d'un nombre algébrique}
\label{preuve25}

L'objectif de cette section est d'utiliser le caractère algébrique de certains nombres pour démontrer des résultats de classification des $\lambda$-quiddités irréductibles, en particulier ceux donnés dans les théorèmes \ref{25} et \ref{26}.

\subsection{Démonstration des théorèmes \ref{25} et \ref{26}}

On commence cette sous-partie en rappelant quelques éléments qui nous seront utiles pour la suite (voir \cite{G} chapitre III, section 1.3).
\\
\\Si $\alpha$ est un nombre complexe algébrique alors :
\begin{itemize}
\item Il existe un unique polynôme unitaire $P \in \bQ[X]$ tel que si $R \in \bQ[X]$ vérifie $R(\alpha)=0$ alors $R$ est un multiple de $P$ dans $\bQ[X]$.
\item Ce polynôme $P$ est irréductible sur $\bQ[X]$ et on l'appelle le polynôme minimal de $\alpha$ sur $\bQ$.
\item Les racines sur $\bC$ du polynôme minimal de $\alpha$ sont les éléments conjugués de $\alpha$.
\end{itemize}

\noindent Notons par ailleurs que l'ensemble des nombres complexes algébriques est infini et dénombrable (voir par exemple \cite{Ca} et \cite {G} corollaire III.57).
\\
\\On peut maintenant démontrer le résultat ci-dessous :

\begin{proposition}
\label{41}

Soient $\alpha$ un nombre complexe algébrique et $\beta$ un de ses éléments conjugués. On note $\Omega(\alpha)$ (resp. $\Omega(\beta)$) l'ensemble des $\lambda$-quiddités sur $<\alpha>$ (resp. sur $<\beta>$). Si $\Omega(\alpha) \neq \emptyset$ alors $\Omega(\beta) \neq \emptyset$ et l'application :
\[\begin{array}{ccccc} 
\theta & : & \Omega(\alpha) & \longrightarrow & \Omega(\beta) \\
 & & (k_{1}\alpha,\ldots,k_{n}\alpha) & \longmapsto & (k_{1}\beta,\ldots,k_{n}\beta)  \\
\end{array}\] est une bijection.
\\
\\De plus, $\theta$ établit également une bijection entre l'ensemble des $\lambda$-quiddités irréductibles sur $<\alpha>$ et l'ensemble des $\lambda$-quiddités irréductibles sur $<\beta>$.

\end{proposition}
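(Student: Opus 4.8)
The plan is to realise the passage from $\alpha$ to one of its conjugates $\beta$ as a field isomorphism and then simply transport the whole structure --- the equation \eqref{a}, the operation $\oplus$ and the relation $\sim$ --- through it. First observe that the statement is trivial when $\alpha=0$: then the minimal polynomial of $\alpha$ is $X$, so $\beta=0$, $<\alpha>=<\beta>=\{0\}$, and $\theta$ is the identity on the set of all-zero tuples of even length. So I may assume $\alpha\neq0$; the minimal polynomial of $\alpha$ is irreducible and different from $X$, hence has nonzero constant term, so none of its roots is $0$, i.e. $\beta\neq0$ as well. Consequently every element of $<\alpha>$ (resp. of $<\beta>$) is of the form $k\alpha$ (resp. $k\beta$) for a \emph{unique} $k\in\bZ$, which makes the formula defining $\theta$ unambiguous.

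Next, since $\alpha$ and $\beta$ are roots of the same irreducible polynomial $P\in\bQ[X]$, there is a field isomorphism $\sigma:\bQ(\alpha)\to\bQ(\beta)$ with $\sigma(\alpha)=\beta$ and $\sigma|_{\bQ}=\mathrm{id}$ (obtained from $\bQ(\alpha)\cong\bQ[X]/(P)\cong\bQ(\beta)$). Being the identity on $\bQ$, $\sigma$ fixes $\bZ$, so $\sigma(k\alpha)=k\beta$ for every $k\in\bZ$; thus $\sigma$ restricts to a bijection $<\alpha>\to<\beta>$, and applying $\sigma$ coordinatewise yields a length-preserving bijection $\Sigma$ from tuples over $<\alpha>$ to tuples over $<\beta>$, whose inverse is induced by $\sigma^{-1}$. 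The crucial point is that coordinatewise application of a ring homomorphism is a ring homomorphism of matrix rings, so, since each factor of $M_{n}(a_{1},\ldots,a_{n})$ is sent by $\sigma$ to $\begin{pmatrix}\sigma(a_{i})&-1\\1&0\end{pmatrix}$ (using $\sigma(\pm1)=\pm1$), we get
\[\sigma\bigl(M_{n}(a_{1},\ldots,a_{n})\bigr)=M_{n}(\sigma(a_{1}),\ldots,\sigma(a_{n}))\]
for any tuple $(a_{1},\ldots,a_{n})$ over $<\alpha>$ (equivalently, use that the entries of $M_{n}$ are, up to sign, the continuants, which by Euler's algorithm are polynomials in the $a_{i}$ with integer coefficients). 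As $\sigma(\pm\mathrm{Id})=\pm\mathrm{Id}$, a tuple over $<\alpha>$ solves \eqref{a} if and only if its $\Sigma$-image does; hence $\theta=\Sigma|_{\Omega(\alpha)}$ maps $\Omega(\alpha)$ into $\Omega(\beta)$, is injective, and is onto because $\sigma^{-1}$ does the same in the other direction. In particular $\Omega(\alpha)\neq\emptyset$ forces $\Omega(\beta)\neq\emptyset$, and $\theta$ is a bijection.

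Finally, for the statement about irreducibility I would check that $\Sigma$ respects the two relevant operations. Additivity of $\sigma$ gives $\Sigma(u\oplus v)=\Sigma(u)\oplus\Sigma(v)$ for all tuples $u,v$ over $<\alpha>$ (read off the definition of $\oplus$, using $\sigma(a_{1}+b_{m})=\sigma(a_{1})+\sigma(b_{m})$), and $\Sigma$ obviously sends $\sim$-equivalent tuples to $\sim$-equivalent tuples. Hence, if a $\lambda$-quiddité $(c_{1},\ldots,c_{n})$ on $<\alpha>$ is reducible, say $(c_{1},\ldots,c_{n})\sim(a_{1},\ldots,a_{m})\oplus(b_{1},\ldots,b_{l})$ with $(b_{1},\ldots,b_{l})$ a $\lambda$-quiddité and $m,l\geq3$, then $\theta(c_{1},\ldots,c_{n})\sim\Sigma(a_{1},\ldots,a_{m})\oplus\theta(b_{1},\ldots,b_{l})$ is an admissible decomposition over $<\beta>$ (same sizes, and $\theta(b_{1},\ldots,b_{l})\in\Omega(\beta)$ by the previous paragraph), so $\theta(c_{1},\ldots,c_{n})$ is reducible; applying the same argument to $\sigma^{-1}$ gives the converse. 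Thus $\theta$ maps reducible $\lambda$-quiddités onto reducible ones and therefore restricts to a bijection between the irreducible $\lambda$-quiddités on $<\alpha>$ and those on $<\beta>$. There is no genuine obstacle in this argument --- it is pure transport of structure; the only places calling for a little care are the degenerate case $\alpha=0$ (well-definedness of $\theta$) and the remark that the matrices $M_{n}$ have integer-polynomial, hence $\sigma$-equivariant, entries, which is precisely what allows the isomorphism to pass through \eqref{a}.
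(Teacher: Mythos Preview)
Your proof is correct. The paper follows the same underlying idea --- that $\alpha$ and $\beta$ satisfy exactly the same polynomial relations over $\bQ$ --- but carries it out by hand rather than via a field isomorphism: for each of the four entries of the equation $M_{n}(k_{1}\alpha,\ldots,k_{n}\alpha)=\epsilon\,Id$ it forms the polynomial $R(X)=K_{m}(k_{i_{1}}X,\ldots,k_{i_{m}}X)-c$ (with $c\in\{0,\pm 1\}$), notes that $R(\alpha)=0$, deduces $P\mid R$ in $\bQ[X]$ and hence $R(\beta)=0$; bijectivity and the irreducibility statement are then checked directly by writing out the decompositions. Your packaging through the isomorphism $\sigma:\bQ(\alpha)\to\bQ(\beta)$ and the induced ring homomorphism $M_{2}(\bQ(\alpha))\to M_{2}(\bQ(\beta))$ is more conceptual and dispatches all four continuant identities, together with the compatibility with $\oplus$ and $\sim$, in a single stroke. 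The paper's version, in exchange, is entirely elementary (no appeal to the existence of $\sigma$) and makes explicit that the decisive input is Euler's algorithm, i.e.\ that the continuants are polynomials with integer coefficients in the entries --- which is exactly the point you invoke, parenthetically, to justify that $\sigma$ commutes with $M_{n}$.
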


\begin{proof}

Si $\alpha \in \bQ$ alors $\alpha=\beta$ et le résultat est vrai. On suppose donc que $\alpha \notin \bQ$, en particulier $\alpha, \beta \neq 0$.
\\
\\Soient $P$ le polynôme minimal de $\alpha$ sur $\bQ$ et $(k_{1}\alpha,\ldots,k_{n}\alpha) \in \Omega(\alpha)$. Il existe $\epsilon \in \{\pm 1\}$ tel que 
\[\epsilon Id = M_{n}(k_{1}\alpha,\ldots,k_{n}\alpha) = \begin{pmatrix}
    K_{n}(k_{1}\alpha,\ldots,k_{n}\alpha) & -K_{n-1}(k_{2}\alpha,\ldots,k_{n}\alpha) \\
    K_{n-1}(k_{1}\alpha,\ldots,k_{n-1}\alpha)  & -K_{n-2}(k_{2}\alpha,\ldots,k_{n-1}\alpha) 
   \end{pmatrix}.\]

\noindent Posons $R(X):=K_{n}(k_{1}X,\ldots,k_{n}X)-\epsilon$. Par l'algorithme d'Euler, $R(X)$ est un polynôme en $X$. De plus, $R(\alpha)=0$. Donc, $R$ est un multiple de $P$ dans $\bQ[X]$, c'est-à-dire qu'il existe $U \in \bQ[X]$ tel que $R=UP$. En particulier, $R(\beta)=U(\beta)P(\beta)=0$. Ainsi, \[K_{n}(k_{1}\beta,\ldots,k_{n}\beta)=\epsilon.\]

\noindent On montre de la même façon que :
\begin{itemize}
\item $K_{n-1}(k_{1}\beta,\ldots,k_{n-1}\beta)=K_{n-1}(k_{2}\beta,\ldots,k_{n}\beta)=0$;
\item $K_{n-2}(k_{2}\beta,\ldots,k_{n-1}\beta)=-\epsilon$.
\end{itemize}

\noindent Donc, $(k_{1}\beta,\ldots,k_{n}\beta)$ est une solution de \eqref{a} sur $<\beta>$. Ainsi, $\Omega(\beta) \neq \emptyset$ et $\theta$ est bien définie.
\\
\\Si $(k_{1}\beta,\ldots,k_{n}\beta) \in \Omega(\beta)$ alors, comme $\alpha$ est un élément conjugué de $\beta$, on a, par ce qui précède, $(k_{1}\alpha,\ldots,k_{n}\alpha) \in \Omega(\alpha)$ et $\theta(k_{1}\alpha,\ldots,k_{n}\alpha)=(k_{1}\beta,\ldots,k_{n}\beta)$. Ainsi, $\theta$ est surjective.
\\
\\Soient $((k_{1}\alpha,\ldots,k_{n}\alpha),(k_{1}'\alpha,\ldots,k_{n}'\alpha)) \in \Omega(\alpha)^{2}$ tels que $\theta(k_{1}\alpha,\ldots,k_{n}\alpha)=\theta(k_{1}'\alpha,\ldots,k_{n}'\alpha)$. Pour tout $i \in [\![1;n]\!]$, $k_{i}\beta=k_{i}'\beta$. Comme $\beta \neq 0$, $k_{i}=k_{i}'$ et $(k_{1}\alpha,\ldots,k_{n}\alpha)=(k_{1}'\alpha,\ldots,k_{n}'\alpha)$, c'est-à-dire $\theta$ est injective. Ainsi, $\theta$ est bijective.
\\
\\On va maintenant montrer que $\theta$ établit une bijection entre l'ensemble des $\lambda$-quiddités irréductibles sur $<\alpha>$ et l'ensemble des $\lambda$-quiddités irréductibles sur $<\beta>$.
\\
\\Soit $(k_{1}\alpha,\ldots,k_{n}\alpha) \in \Omega(\alpha)$ tel que $\theta(k_{1}\alpha,\ldots,k_{n}\alpha)$ est réductible. Il existe $l, l' \geq 3$, $(a_{1}\beta,\ldots,a_{l}\beta) \in \Omega(\beta)$ et $(b_{1}\beta,\ldots,b_{l'}\beta) \in \Omega(\beta)$ tels que :

\[(k_{1}\beta,\ldots,k_{n}\beta) \sim (a_{1}\beta,\ldots,a_{l}\beta) \oplus (b_{1}\beta,\ldots,b_{l'}\beta)= ((a_{1}+b_{l'})\beta,a_{2}\beta,\ldots,a_{l-1}\beta,(a_{l}+b_{1})\beta,b_{2}\beta,\ldots,b_{l'-1}\beta).\]

\noindent On a :

\[(k_{1}\alpha,\ldots,k_{n}\alpha) \sim ((a_{1}+b_{l'})\alpha,a_{2}\alpha,\ldots,a_{l-1}\alpha,(a_{l}+b_{1})\alpha,b_{2}\alpha,\ldots,b_{l'-1}\alpha)=(a_{1}\alpha,\ldots,a_{l}\alpha) \oplus (b_{1}\alpha,\ldots,b_{l'}\alpha).\]

\noindent Comme $(b_{1}\beta,\ldots,b_{l'}\beta) \in \Omega(\beta)$, $(b_{1}\alpha,\ldots,b_{l'}\alpha) \in \Omega(\alpha)$ et donc $(k_{1}\alpha,\ldots,k_{n}\alpha)$ est réductible. Ainsi, l'image d'une solution irréductible par $\theta$ est irréductible.
\\
\\Soit $(k_{1}\alpha,\ldots,k_{n}\alpha) \in \Omega(\alpha)$ réductible. Il existe $l, l' \geq 3$, $(a_{1}\alpha,\ldots,a_{l}\alpha) \in \Omega(\alpha)$ et $(b_{1}\alpha,\ldots,b_{l'}\alpha) \in \Omega(\alpha)$ tels que :

\[(k_{1}\alpha,\ldots,k_{n}\alpha) \sim (a_{1}\alpha,\ldots,a_{l}\alpha) \oplus (b_{1}\alpha,\ldots,b_{l'}\alpha)=((a_{1}+b_{l'})\alpha,a_{2}\alpha,\ldots,a_{l-1}\alpha,(a_{l}+b_{1})\alpha,b_{2}\alpha,\ldots,b_{l'-1}\alpha).\]

\noindent On a :

\[\theta(k_{1}\alpha,\ldots,k_{n}\alpha) = (k_{1}\beta,\ldots,k_{n}\beta) \sim ((a_{1}+b_{l'})\beta,a_{2}\beta,\ldots,a_{l-1}\beta,(a_{l}+b_{1})\beta,b_{2}\beta,\ldots,b_{l'-1}\beta).\]
\noindent Donc, $\theta(k_{1}\alpha,\ldots,k_{n}\alpha)=(a_{1}\beta,\ldots,a_{l}\beta) \oplus (b_{1}\beta,\ldots,b_{l'}\beta)$.
\\
\\Comme $(b_{1}\alpha,\ldots,b_{l'}\alpha) \in \Omega(\alpha)$, $(b_{1}\beta,\ldots,b_{l'}\beta) \in \Omega(\beta)$ et donc $\theta(k_{1}\alpha,\ldots,k_{n}\alpha)$ est réductible. Ainsi, $\theta$ établit une surjection entre les éléments irréductibles de $\Omega(\alpha)$ et ceux de $\Omega(\beta)$.
\\
\\Donc, $\theta$ établit une bijection entre l'ensemble des $\lambda$-quiddités irréductibles sur $<\alpha>$ et l'ensemble des $\lambda$-quiddités irréductibles sur $<\beta>$.

\end{proof}

\noindent Muni de cette proposition, on peut maintenant prouver les théorèmes \ref{25} et \ref{26}.

\begin{proof}[Démonstration du théorème \ref{25}]

Soit $\alpha$ un nombre algébrique. On suppose que $\alpha$ a un élément conjugué $\beta$ de module supérieur à 2. Comme $\left|\beta \right|\geq 2$, on a, par la proposition \ref{36}, que l'ensemble des solutions irréductibles de \eqref{a} sur $<\beta>$ est $\{(0,k\beta,0,-k\beta), (k\beta,0,-k\beta,0), k \in \bZ\}$. Par la proposition \ref{41}, $\theta$ établit une bijection entre les $\lambda$-quiddités irréductibles sur $<\alpha>$ et celles sur $<\beta>$. Donc, l'ensemble des solutions irréductibles de \eqref{a} sur $<\alpha>$ est $\{(0,k\alpha,0,-k\alpha), (k\alpha,0,-k\alpha,0), k \in \bZ\}$.

\end{proof}

\noindent Le théorème \ref{26} se déduit du théorème \ref{25}.

\begin{proof}[Démonstration du théorème \ref{26}]

Soit $(a,b,k) \in \bZ^{3}$ avec $a,b \neq 0$ et $k \geq 2$. On pose $w:=a+b\sqrt{k}$ et on suppose que $w \neq \pm 1$. Si $w \in \bZ^{*}$ alors $\left|w\right| \geq 2$ et, par la proposition \ref{36}, l'ensemble des $\lambda$-quiddités irréductibles sur $<w>$ est celui donné dans l'énoncé. Si $w=0$, le résultat est déjà connu (Théorème \ref{38} i)). On suppose donc maintenant $w \notin \bZ$, c'est-à-dire $b\sqrt{k} \notin \bQ$. Quitte à remplacer $w$ par $-w$, on peut supposer que $a \geq 1$. On distingue deux cas :
\begin{itemize}
\item Si $b \geq 1$ alors $w \geq 2$. Par la proposition \ref{36}, l'ensemble des $\lambda$-quiddités irréductibles sur $<w>$ est celui donné dans l'énoncé.
\\
\item Si $b \leq -1$. Posons $P(X):=X^{2}-2aX+a^{2}-b^{2}k=(X-(a+b\sqrt{k}))(X-(a-b\sqrt{k})) \in \bZ[X]$. Comme $a+b\sqrt{k}, a-b\sqrt{k} \notin \bQ$ et $P$ est de degré 2, $P$ est irréductible sur $\bQ[X]$. Donc, $P$ est le polynôme minimal de $w$ sur $\bQ$ et $a-b\sqrt{k}$ est un élément conjugué de $w$. Or, $a-b\sqrt{k} \geq 2$. Donc, par le théorème \ref{25}, l'ensemble des $\lambda$-quiddités irréductibles sur $<w>$ est celui donné dans l'énoncé.

\end{itemize}

\end{proof}

\begin{example}

{\rm
$\{(0,k(1-\sqrt{2}),0,-k(1-\sqrt{2})), (k(1-\sqrt{2}),0,-k(1-\sqrt{2}),0), k \in \bZ\}$ est l'ensemble des $\lambda$-quiddités irréductibles sur $<1-\sqrt{2}>$.
}

\end{example}

\begin{remark}

{\rm On dispose en fait de la classification complète des $\lambda$-quiddités irréductibles sur $<a+b\sqrt{k}>$ quelles que soient les valeurs de $a, b, k$. En effet, si $b=0$ ou si $k \in \{0, 1\}$ alors la classification est donnée par le corollaire 3.11 de \cite{M6} et si $a=0$ alors la classification est donnée par le théorème \ref{38} et la proposition \ref{36}.
}
\end{remark}

\subsection{Applications}

L'objectif de cette sous-partie est d'utiliser le théorème \ref{25} pour d'obtenir d'autres résultats de classification. On commence par la proposition suivante qui n'est qu'un exemple, parmi bien d'autres, des applications directes possibles du théorème \ref{25} :

\begin{proposition}
\label{42}

Soit $w:=\frac{1-\sqrt{11}}{2}$. L'ensemble des $\lambda$-quiddités irréductibles sur $<w>$ est 
\[\{(0,kw,0,-kw), (kw,0,-kw,0), k \in \bZ\}.\]

\end{proposition}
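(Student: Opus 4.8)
The statement is a one-line consequence of Theorem \ref{25}: I would realise $w=\frac{1-\sqrt{11}}{2}$ as an algebraic number possessing a conjugate of modulus at least $2$, and then invoke that theorem directly. The only computational content is to write down the minimal polynomial of $w$ over $\bQ$ and to estimate the size of its second root.

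\textbf{Step 1: the minimal polynomial.} From $2w-1=-\sqrt{11}$ one gets, by squaring, $4w^{2}-4w+1=11$, i.e.\ $w$ is a root of $P(X)=X^{2}-X-\tfrac{5}{2}\in\bQ[X]$. Since $P$ has degree $2$ and its roots $\tfrac{1\pm\sqrt{11}}{2}$ are irrational (as $\sqrt{11}\notin\bQ$), $P$ has no rational root and is therefore irreducible over $\bQ$; hence $P$ is the minimal polynomial of $w$ on $\bQ$. In particular $w\notin\bQ$, which is exactly what guarantees that the second root is a genuine Galois conjugate of $w$ rather than merely another root of some reducible polynomial — this is the only point that needs a word of justification, and here it is immediate.

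\textbf{Step 2: the conjugate and its modulus.} The other root $\beta=\tfrac{1+\sqrt{11}}{2}$ is then a conjugate of $w$. Since $\sqrt{11}>\sqrt{9}=3$, we have $\beta=\tfrac{1+\sqrt{11}}{2}>2$, so $\left|\beta\right|\geq 2$. (Note that $\left|w\right|=\tfrac{\sqrt{11}-1}{2}<2$, which is why Proposition \ref{36} does not apply to $w$ itself and the passage through the conjugate is needed.)

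\textbf{Step 3: conclusion.} Since $w$ is algebraic and admits a conjugate of modulus at least $2$, Theorem \ref{25} applies and yields that the set of irreducible $\lambda$-quiddités on $<w>$ is exactly $\{(0,kw,0,-kw),(kw,0,-kw,0);\ k\in\bZ\}$, as claimed. I do not expect any real obstacle: once the minimal polynomial and the estimate $\tfrac{1+\sqrt{11}}{2}>2$ are in place, the result follows directly from Theorem \ref{25}.
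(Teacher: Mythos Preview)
Your proof is correct and follows exactly the same route as the paper: identify the minimal polynomial $P(X)=X^{2}-X-\tfrac{5}{2}$, observe it is irreducible over $\bQ$ because its roots $\tfrac{1\pm\sqrt{11}}{2}$ are irrational, note that the conjugate $\tfrac{1+\sqrt{11}}{2}\geq 2$, and apply Theorem~\ref{25}. The only cosmetic difference is that you derive $P$ by squaring $2w-1=-\sqrt{11}$ whereas the paper simply writes down the factorisation; the logical content is identical.
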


\begin{proof}

Posons $P(X):=X^{2}-X-\frac{5}{2}=(X-\frac{1-\sqrt{11}}{2})(X-\frac{1+\sqrt{11}}{2}) \in \bQ[X]$. Comme $\frac{1-\sqrt{11}}{2}, \frac{1+\sqrt{11}}{2} \notin \bQ$ et $P$ est de degré 2, $P$ est irréductible sur $\bQ[X]$. Donc, $P$ est le polynôme minimal de $w$ sur $\bQ$ et $\frac{1+\sqrt{11}}{2}$ est un élément conjugué de $w$. Or, $\frac{1+\sqrt{11}}{2} \geq 2$. Donc, par le théorème \ref{25}, l'ensemble des $\lambda$-quiddités irréductibles sur $<w>$ est celui donné dans l'énoncé.

\end{proof}

On peut également utiliser le théorème \ref{25} pour obtenir des résultats de classification sur certains sous-groupes monogènes sans connaître précisément les éléments conjugués. Pour cela, on utilisera le résultat classique rappelé ci-dessous :

\begin{theorem}[Rouché, \cite{R} et \cite{T} corollaire 8.6.3]
\label{43}

Soient $U$ un ouvert de $\bC$, $a \in U$ et $r>0$ tels que $\overline{D(a,r)}:=\{z \in \bC,~\left|z-a\right|\leq r\} \subset U$. Soient $f$ et $g$ deux fonctions holomorphes sur $U$ vérifiant $\left|f(z)-g(z)\right|<\left|g(z)\right|$ pour tout $z$ appartenant au cercle de centre $a$ et de rayon $r$. Alors $f$ et $g$ ont le même nombre de zéros (comptés avec leur multiplicité) dans $D(a,r):=\{z \in \bC,~\left|z-a\right|< r\}$.

\end{theorem}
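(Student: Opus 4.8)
The plan is to deduce the statement from the argument principle. First I would record two consequences of the hypothesis $\left|f(z)-g(z)\right| < \left|g(z)\right|$ on the circle $C := \{z \in \bC : \left|z-a\right| = r\}$: it forces $g$ to be zero-free on $C$ (otherwise the inequality would read $\left|f(z)\right| < 0$) and it likewise forces $f$ to be zero-free on $C$ (otherwise it would read $\left|g(z)\right| < \left|g(z)\right|$). Consequently the quotient $h := f/g$ is holomorphic and non-vanishing on a neighbourhood of $C$, and by the argument principle the integer
\[
N_f - N_g = \frac{1}{2\pi i}\int_{C} \frac{f'(z)}{f(z)}\,dz \;-\; \frac{1}{2\pi i}\int_{C} \frac{g'(z)}{g(z)}\,dz = \frac{1}{2\pi i}\int_{C}\frac{h'(z)}{h(z)}\,dz
\]
equals the winding number of the loop $h\circ C$ about the origin, where $N_f$ and $N_g$ denote the numbers of zeros of $f$ and of $g$ in $D(a,r)$, counted with multiplicity (holomorphy of $f$ and $g$ on the open set $U \supset \overline{D(a,r)}$ is exactly what is needed to apply the argument principle on $C$).

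Second, I would control $h$ on $C$: for $z \in C$ one has $\left|h(z) - 1\right| = \left|f(z) - g(z)\right|/\left|g(z)\right| < 1$, so the image $h(C)$ lies in the open disc $D(1,1) = \{w \in \bC : \left|w-1\right|<1\}$, which does not contain $0$. Since that disc is convex and $w \mapsto 1/w$ is holomorphic on it, the integral $\frac{1}{2\pi i}\int_{h\circ C}\frac{dw}{w}$ vanishes (equivalently, there is a holomorphic branch of the logarithm on $D(1,1)$). Hence the winding number above is $0$, that is $N_f = N_g$, which is the claim.

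An equivalent route, which I would mention as an alternative, avoids forming $h$ and uses a homotopy: put $f_t := g + t(f-g)$ for $t \in [0,1]$. The triangle inequality gives $\left|f_t(z)\right| \geq \left|g(z)\right| - t\left|f(z)-g(z)\right| \geq \left|g(z)\right| - \left|f(z)-g(z)\right| > 0$ for every $z \in C$, so each $f_t$ is zero-free on $C$ and $N(t) := \frac{1}{2\pi i}\int_C \frac{f_t'(z)}{f_t(z)}\,dz$ is well defined. The integrand is continuous in $(t,z)$ on the compact set $[0,1]\times C$, so $N$ is continuous; being integer-valued it is constant, whence $N(0) = N(1)$, i.e. $N_g = N_f$.

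The substantive inputs are therefore the argument principle itself (that $\frac{1}{2\pi i}\int_C f'/f$ counts zeros with multiplicity) and the vanishing of the winding number of a loop contained in a disc that omits the origin; both are standard results of a first course in complex analysis, so I would simply cite them. Everything else — the two triangle-inequality estimates ruling out zeros on $C$ and the bound $\left|h-1\right|<1$ on $C$ — is routine, so I expect no real obstacle beyond invoking those two facts correctly.
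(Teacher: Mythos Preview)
Your proof is correct and is the standard argument-principle derivation of Rouché's theorem; both the winding-number version via $h=f/g$ and the homotopy version via $f_t = g + t(f-g)$ are valid and well presented. However, there is nothing to compare against: the paper does not prove this statement at all. It is quoted as a classical result with references to Rouché's original memoir and to Tauvel's textbook (corollaire 8.6.3), and is used purely as a black box to localise roots of polynomials in the applications that follow. So your write-up goes well beyond what the paper does here, but is entirely in line with how the result is established in the cited sources.
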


On aura également besoin de critères d'irréductibilité. Pour les exemples qui seront considérés, on fera usage des deux qui sont rappelés ci-dessous.

\begin{theorem}[Critère d'Eisenstein, \cite{G} Théorème I.52]
\label{44}

Soit $P(X):=\sum_{i=0}^{n} a_{i}X^{i} \in \bZ[X]$ avec $n \geq 1$ et $a_{n} \neq 0$. On suppose qu'il existe un nombre premier $p$ tel que :
\begin{itemize}
\item pour tout $i$ dans $[\![1;n-1]\!]$ $p$ divise $a_{i}$;
\item $p$ ne divise pas $a_{n}$;
\item $p^{2}$ ne divise pas $a_{0}$.
\end{itemize}
Alors $P$ est irréductible dans $\bQ[X]$.

\end{theorem}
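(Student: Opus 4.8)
La stratégie serait la méthode classique : raisonner par l'absurde en réduisant les coefficients modulo $p$, le lemme de Gauss servant à ramener une éventuelle factorisation dans $\bQ[X]$ à une factorisation dans $\bZ[X]$ sans perte sur les degrés.

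Je commencerais par supposer $P$ réductible dans $\bQ[X]$. Comme $\deg(P)=n\geq 1$ et que $P$ n'est pas inversible dans $\bQ[X]$, on pourrait écrire $P=QR$ avec $Q,R\in\bQ[X]$ et $\deg(Q),\deg(R)\geq 1$. J'invoquerais alors le lemme de Gauss pour obtenir $Q_{1},R_{1}\in\bZ[X]$ tels que $P=Q_{1}R_{1}$, $\deg(Q_{1})=\deg(Q)$ et $\deg(R_{1})=\deg(R)$ (en remplaçant $Q$ et $R$ par $\lambda Q$ et $\lambda^{-1}R$ pour un rationnel $\lambda$ bien choisi). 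On aurait donc $j:=\deg(Q_{1})\geq 1$, $m:=\deg(R_{1})\geq 1$ et $j+m=n$.

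Ensuite, je considérerais le morphisme d'anneaux $\varphi:\bZ[X]\to\mathbb{F}_{p}[X]$ de réduction des coefficients modulo $p$. Puisque $p$ divise tous les coefficients de $P$ sauf $a_{n}$, on aurait $\varphi(P)=\overline{a_{n}}\,X^{n}$ avec $\overline{a_{n}}\neq 0$, d'où l'égalité $\overline{a_{n}}\,X^{n}=\varphi(Q_{1})\,\varphi(R_{1})$ dans l'anneau factoriel $\mathbb{F}_{p}[X]$. Comme $X$ y est irréductible, l'unicité de la factorisation forcerait $\varphi(Q_{1})=b\,X^{s}$ et $\varphi(R_{1})=c\,X^{t}$ pour certains $b,c\in\mathbb{F}_{p}^{\times}$ et $s,t\geq 0$ avec $s+t=n$. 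Par ailleurs, le coefficient dominant de $Q_{1}$ (resp.\ de $R_{1}$) divise $a_{n}$, donc n'est pas multiple de $p$ : $\varphi$ préserve le degré de ces deux polynômes, ce qui donne $s=j\geq 1$ et $t=m\geq 1$. Les termes constants de $\varphi(Q_{1})$ et de $\varphi(R_{1})$ seraient alors nuls, autrement dit $p$ diviserait le terme constant de $Q_{1}$ et celui de $R_{1}$ ; comme $a_{0}$ est le produit de ces deux termes constants, on obtiendrait $p^{2}\mid a_{0}$, ce qui contredit l'hypothèse. On en conclurait que $P$ est irréductible dans $\bQ[X]$.

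Le point le plus délicat serait l'usage du lemme de Gauss à la première étape : c'est le seul ingrédient réellement non élémentaire, et il faut l'appliquer sous la bonne forme (passage d'une factorisation rationnelle à une factorisation entière respectant les degrés). Une fois cette factorisation rapatriée dans $\bZ[X]$, tout le reste se ramène à de l'arithmétique dans $\mathbb{F}_{p}[X]$ et au contrôle du degré et du terme constant via la non-divisibilité de $a_{n}$ par $p$.
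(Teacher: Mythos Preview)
The paper does not supply its own proof of this statement: the Eisenstein criterion is merely quoted from Gozard's textbook (\cite{G}, Th\'eor\`eme I.52) and then applied as a tool in Section~4. There is thus no in-paper argument to compare your proposal against.

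Your argument is the standard and correct proof of Eisenstein's criterion. One point deserves a comment, however. When you assert that ``$p$ divise tous les coefficients de $P$ sauf $a_{n}$'' and conclude $\varphi(P)=\overline{a_{n}}\,X^{n}$, you are tacitly using $p\mid a_{0}$. The statement as printed in the paper only requires $p\mid a_{i}$ for $i\in[\![1;n-1]\!]$ together with $p^{2}\nmid a_{0}$, which does not by itself force $p\mid a_{0}$; taken literally, those hypotheses are not sufficient (for instance $P(X)=X^{2}-1$ with $p=2$ satisfies them but is reducible). This is almost certainly a typo in the paper for the classical hypothesis $p\mid a_{i}$ for all $i\in[\![0;n-1]\!]$, and your proof is written for that corrected version. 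With that understood, every step---Gauss's lemma to descend the factorisation to $\bZ[X]$ while preserving degrees, reduction modulo $p$, the degree-preservation argument via $p\nmid a_{n}$, and the final contradiction $p^{2}\mid a_{0}$---is sound.
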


\begin{theorem}[Critère d'irréductibilité d'Osada, \cite{Z} Théorème 1.8.6]
\label{44}

Soient $p$ un nombre premier et $P(X):=a_{n}X^{n}+\ldots+a_{1}X \pm p \in \bZ[X]$ avec $n \geq 1$ et $a_{n} \neq 0$. Si $p> \left|a_{1}\right|+\ldots+\left|a_{n}\right|$ alors $P$ est irréductible dans $\bQ[X]$.

\end{theorem}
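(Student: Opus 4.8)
The plan is to run the classical Perron/Osada argument: pin down the location of the complex roots of $P$ and combine this with the fact that the constant term $\pm p$ is prime. First I would reduce a hypothetical factorization over $\bQ$ to one over $\bZ$. The content of $P$ (the gcd of its coefficients) divides the constant term $\pm p$, hence equals $1$ or $p$; it cannot equal $p$, for otherwise $p$ would divide $a_{n}$ and then $\left|a_{n}\right| \geq p$, contradicting $\left|a_{n}\right| \leq \left|a_{1}\right| + \cdots + \left|a_{n}\right| < p$. So $P$ is primitive, and by Gauss's lemma it suffices to show that $P$ admits no factorization $P = QR$ with $Q, R \in \bZ[X]$ and $\deg Q, \deg R \geq 1$. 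When $n = 1$ this is automatic, so one may assume $n \geq 2$.

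The key step, and the only place where the hypothesis $p > \left|a_{1}\right| + \cdots + \left|a_{n}\right|$ is used, is the root bound: every complex root $\alpha$ of $P$ satisfies $\left|\alpha\right| > 1$. Indeed, if $\left|\alpha\right| \leq 1$, then using $a_{n}\alpha^{n} + \cdots + a_{1}\alpha = \mp p$ and the triangle inequality,
\[ p = \left| a_{n}\alpha^{n} + \cdots + a_{1}\alpha \right| \;\leq\; \sum_{i=1}^{n} \left|a_{i}\right|\left|\alpha\right|^{i} \;\leq\; \sum_{i=1}^{n} \left|a_{i}\right| \;<\; p, \]
which is absurd.

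It then remains to derive a contradiction from a factorization $P = QR$ as above. Evaluating at $0$ gives $Q(0)\,R(0) = P(0) = \pm p$; since $p$ is prime and $Q(0), R(0)$ are integers (necessarily nonzero, as $P(0) \neq 0$), one of the two factors, say $Q(0)$, satisfies $\left|Q(0)\right| = 1$. On the other hand, writing $Q(X) = c\prod_{j=1}^{\deg Q}(X - \beta_{j})$ with $c \in \bZ \setminus \{0\}$ its leading coefficient, each $\beta_{j}$ is a root of $Q$, hence of $P$, hence satisfies $\left|\beta_{j}\right| > 1$ by the previous step; since $\left|c\right| \geq 1$ and $\deg Q \geq 1$ this forces $\left|Q(0)\right| = \left|c\right|\prod_{j}\left|\beta_{j}\right| > 1$, a contradiction. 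Hence no such factorization exists, so $P$ is irreducible over $\bZ$ and, being primitive, over $\bQ$. I do not expect a genuine obstacle here: the whole argument is elementary, the crux being the root-location estimate; the only points needing a little care are the content discussion (to legitimately pass from $\bQ[X]$ to $\bZ[X]$) and the remark that the constant terms of the factors are nonzero, so that the root-product estimate applies.
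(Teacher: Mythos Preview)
Your argument is correct and is exactly the classical Osada/Perron-type proof: locate all roots outside the closed unit disk via the hypothesis $p>\sum_i|a_i|$, then use primality of the constant term to rule out any nontrivial integer factorization through the product-of-roots expression for $|Q(0)|$. The content discussion and the passage through Gauss's lemma are handled cleanly, and the remark that $Q(0),R(0)\neq 0$ is the small detail that makes the final estimate legitimate.

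There is nothing to compare against, however: in the paper this theorem is not proved but merely quoted from an external reference (\cite{Z}, Th\'eor\`eme~1.8.6), alongside Eisenstein's criterion, as one of two ready-made irreducibility tests used to certify that certain explicit polynomials are minimal polynomials. So your proposal does not duplicate or diverge from anything in the paper; it simply supplies a self-contained proof of a result the paper takes for granted.
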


Bien entendu, il existe une myriade d'autres résultats d'irréductibilité que l'on pourrait utiliser comme le critère de Perron (voir \cite{Z} Théorème 1.8.1), le critère de Dumas (voir \cite{D}) ou bien d'autres encore. 
\\
\\On détaille maintenant, comme annoncé, deux cas utilisant les éléments précédents :
\begin{itemize}
\item Soit $w$ une racine de $A(X):=X^{5}+10X^{4}+5X^{3}+5$. Par le critère d'Eisenstein, $A$ est irréductible sur $\bQ[X]$. Donc, $A$ est le polynôme minimal de $w$ sur $\bQ$. Posons $B(X):=10X^{4}$. Soit $z \in \bC$ avec $\left|z\right|=2$. On a :
\[\left|A(z)-B(z)\right|=\left|z^{5}+5z^{3}+5\right| \leq \left|z\right|^{5}+5\left|z\right|^{3}+5=77<160=\left|B(z)\right|.\]
\noindent Par le théorème de Rouché, $A$ a quatre racines dans $D(0,2)$. Ainsi, $A$ a une racine de module supérieur ou égal à 2. Donc, on peut classifier les $\lambda$-quiddités irréductibles sur $<w>$ avec le théorème \ref{25}.
\\
\item Soit $w$ une racine de $C(X):=X^{7}+5X^{6}+2X^{3}-X^{2}+X+11$. Par le critère d'Osada, $C$ est irréductible sur $\bQ[X]$. Donc, $C$ est le polynôme minimal de $w$ sur $\bQ$. Posons $D(X):=5X^{6}$. Soit $z \in \bC$ avec $\left|z\right|=2$. On a :
\[\left|C(z)-D(z)\right| \leq \left|z\right|^{7}+2\left|z\right|^{3}+\left|z\right|^{2}+\left|z\right|+11=161<320=\left|D(z)\right|.\]
\noindent Par le théorème de Rouché, $D$ a six racines dans $D(0,2)$. Ainsi, $D$ a une racine de module supérieur ou égal à 2. Donc, on peut classifier les $\lambda$-quiddités irréductibles sur $<w>$ avec le théorème \ref{25}.
\end{itemize} 

\begin{remarks}
{\rm 
i) Dans les deux exemples précédents, on a utilisé le théorème de Rouché. Cela dit, on peut évidemment essayer de se servir d'autres résultats de localisation des racines de polynômes, comme par exemple le théorème de Sturm (voir \cite{S}).
\\
\\ii) Les différents exemples présentés dans cette section montrent que le théorème \ref{25} permet de procéder à la classification des $\lambda$-quiddités irréductibles dans une kyrielle de cas différents qui auraient sans doute étaient difficiles à considérer avec des méthodes plus directes.
}

\end{remarks}

\section{Démonstration du théorème \ref{27}}
\label{preuve27}

On va maintenant démontrer le dernier théorème présenté dans la section \ref{RP} en utilisant une méthode proche de celle utilisée dans \cite{M6} pour démontrer le théorème de classification sur $<\sqrt{k}>$.

\begin{proof}[Preuve du théorème \ref{27}]

Soit $w:=a+ib \in \bC$ avec $\left|ab\right| \geq 1$. Notre objectif est de montrer que toutes les solutions de \eqref{a} sur $<w>$ de taille paire contiennent un zéro.
\\
\\Soient $m:=2n \geq 4$ un entier naturel pair et $(a_{1},\ldots,a_{m})$ une $\lambda$-quiddité sur $<w>$. Pour tout $i \in [\![1;m]\!]$, $a_{i}=k_{i}w$ avec $k_{i} \in \bZ$. Supposons par l'absurde qu'aucune composante de $(a_{1},\ldots,a_{m})$ soit nulle. 
\\
\\Puisque $(a_{1},\ldots,a_{m})$ est une $\lambda$-quiddité, il existe $\epsilon \in \{\pm 1 \}$ tel que \[\epsilon Id=M_{m}(a_{1},\ldots,a_{m})=\begin{pmatrix}
    K_{m}(a_{1},\ldots,a_{m}) & -K_{m-1}(a_{2},\ldots,a_{m}) \\
    K_{m-1}(a_{1},\ldots,a_{m-1})  & -K_{m-2}(a_{2},\ldots,a_{m-1}) 
   \end{pmatrix}.\]
	
\noindent Grâce à l'algorithme d'Euler, on sait que $K_{2n}(k_{1}w,k_{2}w,\ldots,k_{2n-1}w,k_{2n}w)$ est une somme de produits des $k_{j}w$ (et potentiellement de -1). Dans chaque terme d'une de ces sommes, pour chaque $j$ impair $k_{j}$ est suivi d'un $k_{l}$ avec $l$ pair. On effectue alors la manipulation suivante :
\[k_{j}wk_{l}w=k_{j}(k_{l}w^{2}).\]

\noindent Cela conduit à l'égalité suivante : \[K_{2n}(k_{1}w,k_{2}w,\ldots,k_{2n-1}w,k_{2n}w)=K_{2n}(k_{1},k_{2}w^{2},\ldots,k_{2n-1},k_{2n}w^{2}).\] 

\noindent En procédant de la même manière, on a : $K_{2n-2}(k_{2}w,\ldots,k_{2n-1}w)=K_{2n-2}(k_{2}w^{2},k_{3},\ldots,k_{2n-2}w^{2},k_{2n-1})$ (car pour pour chaque $j$ impair, $k_{j}$ est précédé d'un $k_{l}$ avec $l$ pair). 
\\
\\On procède de façon analogue pour les autres termes de $M_{m}(a_{1},\ldots,a_{m})$. Par l'algorithme d'Euler, $K_{2n-1}(k_{1}w,k_{2}w,\ldots,k_{2n-1}w)$ est une somme de produits des $k_{j}w$ (et potentiellement de -1). Pour chacun des termes de cette somme, deux possibilités existent :
\begin{itemize}
\item le terme est de la forme $k_{j}w$ avec $j$ impair;
\item le terme est un produit de $2v+1$ éléments du type $k_{j}w$ avec $v+1$ indices $j$ impair. Pour chaque $m$ pair $k_{m}$ est précédé d'un $k_{l}$ avec $l$ impair. On effectue alors la manipulation suivante :\[k_{l}wk_{m}w=k_{l}(k_{m}w^{2}).\]
\end{itemize}
\noindent On obtient ainsi $0=K_{2n-1}(k_{1}w,k_{2}w,\ldots,k_{2n-1}w)=wK_{2n-1}(k_{1},k_{2}w^{2},\ldots,k_{2n-1})$. Ceci implique que $0=K_{2n-1}(k_{1},k_{2}w^{2},\ldots,k_{2n-1})$. De la même manière, on obtient $0=K_{2n-1}(k_{2}w^{2},k_{3},\ldots,k_{2n}w^{2})$.
\\
\\Ainsi, $(k_{1},k_{2}w^{2},\ldots,k_{2n-1},k_{2n}w^{2})$ est une solution de \eqref{a} sur $\bC$.
\\
\\Par le théorème \ref{33}, cette solution contient un élément de module strictement plus petit que 2. Or, $\left|w^{2}\right|=\left|w\right|^{2}=a^{2}+b^{2} \geq 2\left|ab\right| \geq 2$ et tous les $k_{j}$ sont non nuls. Ainsi, les seules composantes de module strictement plus petit que 2 ne peuvent être que des $k_{j}$ (avec $j$ impair). Donc, il existe $j$ impair tel que $k_{j}=\pm 1$. En utilisant les deux identités $M_{3}(a,1,b)=M_{2}(a-1,b-1)$ et $M_{3}(a,-1,b)=-M_{2}(a+1,b+1)$, on peut "réduire" tous les $\pm 1$ contenus dans $(k_{1},k_{2}w^{2},\ldots,k_{2n-1},k_{2n}w^{2})$. On obtient ainsi une nouvelle solution de \eqref{a} sur $\bC$ dont les composantes sont nécessairement de la forme :
\begin{itemize}
\item $k_{j}$ avec $j$ impair et $\left|k_{j}\right| \geq 2$;
\item $k_{j}w^{2}$ avec $j$ pair;
\item $k_{j}w^{2}\pm 1$ avec $j$ pair;
\item $k_{j}w^{2}\pm 2$ avec $j$ pair.
\\
\end{itemize}

\noindent Par le théorème \ref{33}, cette nouvelle $\lambda$-quiddité contient un élément de module strictement plus petit que 2. Celui-ci appartient nécessairement aux deux dernières catégories. Calculons le module pour ces deux possibilités. On a :

\begin{itemize}
\item $\left|k_{j}w^{2}\pm 1\right|=\left|(k_{j}(a^{2}-b^{2})\pm 1) +2ik_{j}ab\right|=\sqrt{(k_{j}(a^{2}-b^{2})\pm 1)^{2}+4a^{2}b^{2}k_{j}^{2}} \geq 2\left|k_{j}\right|\left|ab\right| \geq 2$,
\item $\left|k_{j}w^{2}\pm 2\right|=\left|(k_{j}(a^{2}-b^{2})\pm 2) +2ik_{j}ab\right|=\sqrt{(k_{j}(a^{2}-b^{2})\pm 2)^{2}+4k_{j}^{2}a^{2}b^{2}} \geq 2\left|k_{j}\right|\left|ab\right| \geq 2$.
\end{itemize}

\noindent Ainsi, toutes les composantes de la solution obtenue en supprimant les $\pm 1$ dans $(k_{1},k_{2}w^{2},\ldots,k_{2n-1},k_{2n}w^{2})$ sont de module supérieur ou égal à 2. Ceci est absurde. Donc, toutes les solutions de \eqref{a} sur $<w>$ de taille paire contiennent un zéro.
\\
\\Par la proposition \ref{39}, le résultat est démontré.

\end{proof}

\begin{examples}

{\rm
On peut appliquer le théorème que nous venons de démontrer aux sous-groupes suivants : $<1+i>$, $<\frac{3}{2}-\frac{7}{8}i>$, $<\frac{1}{\sqrt{2}}+\sqrt{3} i>$.
}

\end{examples}

Dans le graphique ci-dessous (construit avec le logiciel Maxima), on a représenté le cercle de centre 0 et de rayon 2 et les courbes d'équation $y=\frac{1}{x}$ et $y=-\frac{1}{x}$. Si $w \in \bC$ est à l'extérieur du cercle alors les $\lambda$-quiddités irréductibles sur $<w>$ peuvent être classées avec la proposition \ref{36}. Si $w$ appartient aux zones hachurées alors on ne peut pas utiliser la proposition \ref{36}. En revanche, on peut classifier les $\lambda$-quiddités irréductibles sur $<w>$ avec le théorème \ref{27}.

\centerline{\includegraphics{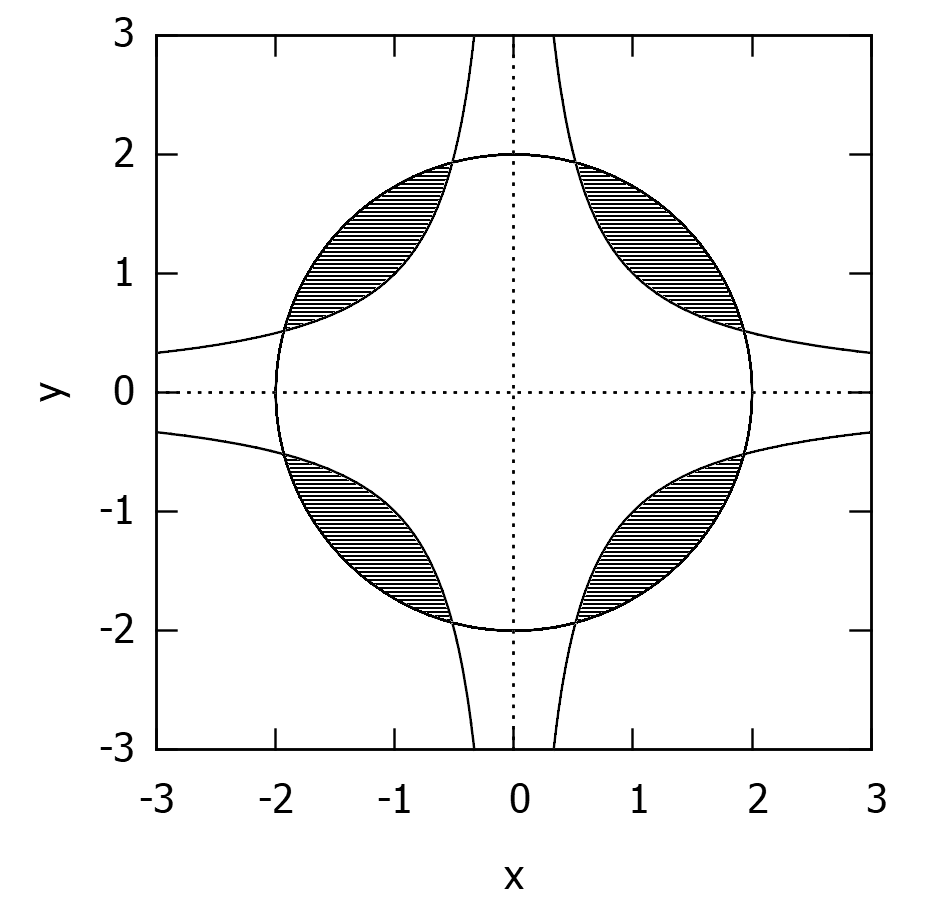}}


\begin{thebibliography}{1}

\bibitem{BR}
F. Bergeron, C. Reutenauer, 
{\it $SL_{k}$-tilings of the plane}, 
Illinois J. Math., Vol. 54 no. 1, (2010), pp 263-300.

\bibitem{Ca}
G. Cantor, 
{\it Sur une propriété du système de tous les nombres algébriques réels}, 
Acta Mathemetica, Vol. 2, (1883), pp 305-310.

\bibitem{CO}
C. Conley, V. Ovsienko, 
{\it Rotundus: triangulations, Chebyshev polynomials, and Pfaffians}, 
Math. Intelligencer, Vol. 40 no. 3, (2018), pp 45-50.

\bibitem{Cox}
H. S. M. Coxeter, 
{\it Frieze patterns},
Acta Arithmetica, Vol. 18, (1971), pp 297-310.

\bibitem{C} 
M. Cuntz,
{\it A combinatorial model for tame frieze patterns}, 
Munster J. Math., Vol. 12 no. 1, (2019), pp 49-56.

\bibitem{CH} 
M. Cuntz, T. Holm,
{\it Frieze patterns over integers and other subsets of the complex numbers}, 
J. Comb. Algebra., Vol. 3 no. 2, (2019), pp 153-188.

\bibitem{D}
G. Dumas,
{\it Sur quelques cas d’irréductibilité des polynômes à coefficients rationnels},
Journal de mathématiques pures et appliquées 6$^{e}$ série, tome 2, (1906), pp 191-258.

\bibitem{G} 
I. Gozard,
{\it Théorie de Galois - niveau L3-M1 - 2e édition}, 
Ellipses, 2009.

\bibitem{M1}
F. Mabilat,
\textit{Combinatoire des sous-groupes de congruence du groupe modulaire}, 
Annales Mathématiques Blaise Pascal, Vol. 28 no. 1, (2021), pp. 7-43. doi : 10.5802/ambp.398. https://ambp.centre-mersenne.org/articles/10.5802/ambp.398/.

\bibitem{M2}
F. Mabilat,
\textit{$\lambda$-quiddité sur $\mathbb{Z}[\alpha]$ avec $\alpha$ transcendant},
Mathematica Scandinavica, Vol. 128 no. 1, (2022), pp 5-13, https://doi.org/10.7146/math.scand.a-128972.

\bibitem{M3}
F. Mabilat,
{\it Combinatoire des sous-groupes de congruence du groupe modulaire II}. 
Annales Mathématiques Blaise Pascal, Vol. 28 no. 2, (2021), pp. 199-229. doi : 10.5802/ambp.404. https://ambp.centre-mersenne.org/articles/10.5802/ambp.404/.

\bibitem{M4}
F. Mabilat,
\textit{Entiers monomialement irréductibles}, hal-03487145, arXiv:2112.10410.

\bibitem{M5}
F. Mabilat,
\textit{Solutions monomiales minimales irréductibles dans $SL_{2}(\mathbb{Z}/p^{n}\mathbb{Z})$},
Bulletin des Sciences Mathématiques, Vol. 194, (2024), Article 103456, ISSN 0007-4497, https://doi.org/10.1016/j.bulsci.2024.103456.

\bibitem{M6}
F. Mabilat,
\textit{$\lambda$-quiddité sur certains sous-groupes monogènes de $\bC$},
Confluentes mathematici, Vol. 17, (2025), pp. 11-31.

\bibitem{Mo1}
S. Morier-Genoud,
\textit{Coxeter's frieze patterns at the crossroad of algebra, geometry and combinatorics}, 
Bull. Lond. Math. Soc., Vol. 47 no. 6, (2015), pp 895-938.

\bibitem{O} 
V. Ovsienko, 
{\it Partitions of unity in $SL(2,\mathbb{Z})$,  negative continued fractions,  and dissections of polygons,} 
Res. Math. Sci., Vol. 5 no. 2, (2018), Article 21, 25 pp. 

\bibitem{R}
E. Rouché,
{\it Mémoire sur la série de Lagrange},
J. de l’École Impériale polytechnique 39, tome XXII, (1862), pp 193–224.

\bibitem{S}
C. Sturm,
{\it Mémoire sur la résolution des équations numériques},
Mémoires présentés par divers Savants étrangers à l'Acad. royale des sc., section sc. math, phys., tome VI, (1835), pp 273-318.

\bibitem{T}
P. Tauvel,
{\it Analyse complexe pour la Licence 3},
Dunod, 2020.

\bibitem{WZ}
M. Weber, M. Zhao,
{\it Factorization of frieze patterns,}
Revista de la Unión Matemática Argentina, Vol. 60 no. 2, (2019), pp 407-415.

\bibitem{Z}
K. Zhao,
{\it Ring And Field Theory},
World Scientific Publishing Co, 2022.


\end{thebibliography}
\end{document}